\documentclass[12pt, reqno]{amsart}
\usepackage{amsaddr}
\usepackage{graphicx, amssymb, amsthm, amsfonts, latexsym, epsfig, amsmath, amscd, amsbsy}
\usepackage[usenames]{color}
\usepackage{tikz, caption, subcaption}
\usepackage{enumerate}
\usepackage{epstopdf}

\usetikzlibrary{decorations.pathmorphing}
\usetikzlibrary{calc}

\usetikzlibrary{arrows, patterns, calc}
\pgfdeclarelayer{bg}
\pgfsetlayers{bg,main}

\pgfdeclarelayer{bg}
\pgfsetlayers{bg,main}

\numberwithin{equation}{section}
\theoremstyle{plain}
\newtheorem{theorem}{Theorem}[section]
\newtheorem{lem}[theorem]{Lemma}
\newtheorem{cor}[theorem]{Corollary}

\theoremstyle{definition}
\newtheorem{df}[theorem]{Definition}
\newtheorem{rem}[theorem]{Remark}

\renewcommand{\phi}{\varphi}

\def\Int{\mathop\mathrm{Int}}
\def\Cl{\mathop\mathrm{Cl}}
\def\dist{\mathop\mathrm{dist}}

\def\Conv{\mathop\mathrm{Conv}}

\newcommand\restrict[1]{\raisebox{-.5ex}{$|$}_{#1}}

\usepackage{hyperref}
\hypersetup{
    colorlinks=true,
    linkcolor=blue,
    citecolor=violet,
    pdftitle={Accumulation Sets and Zero Entropy Dynamics in the Lozi Map},
    pdfauthor={Kristijan Kilassa Kvaternik}
    }

\begin{document}
	
	\title{Accumulation Sets and Zero Entropy Dynamics in the Lozi Map}
	\author[Kristijan Kilassa Kvaternik]{Kristijan Kilassa Kvaternik}
	\address[K. Kilassa Kvaternik]{
		University of Zagreb,
		Faculty of Electrical Engineering and Computing,
		Department of Applied Mathematics,
		Unska 3,
		10\,000 Zagreb,
		Croatia -- and -- Jagiellonian University,
		Institute of Mathematics,
		ul.\ prof.\ Stanis{\l}awa {\L}ojasiewicza 6,
		30-348 Kraków,
		Poland}
	%\urladdr{\url{https://www.fer.unizg.hr/en/kristijan.kilassa_kvaternik}}
	\email{kristijan.kilassakvaternik@fer.unizg.hr}
		
	\date{April 24, 2026}
	
	\subjclass[2020]{37B40, 37B20, 37E30}
	\keywords{Lozi map, non-wandering points, non-wandering set, topological entropy, zero entropy}
	
	\begin{abstract}
				
		For the Lozi map $L_{a,b}$, we consider parameter pairs for which the fixed point $X$ has no homoclinic points and the period-two orbit $\{P,P'\}$ is attracting. For such parameters, let $\ell$ be the set of accumulation points of the unstable manifold $W_X^u$ that do not lie on $W_X^u$. We construct a polygon $\mathcal{D}$ whose forward images under $L_{a,b}$ form nested sequences of sets that eventually become trapping. We show that this geometric construction gives a characterization of $\ell$ as the intersection of these iterates. Using this structure, we prove that the non-wandering set for $L_{a,b}^2$ is contained in the union of $\ell$ and the set of fixed points of $L_{a,b}$. As a consequence, the Lozi map, restricted to the complement of $\ell$ in the plane, has zero topological entropy. This result extends a recent one of Misiurewicz and \v{S}timac to a broader set of parameters. 
		
	\end{abstract}
	
	\maketitle	
	
	\baselineskip=18pt
	
	\textbf{In the theory of dynamical systems, understanding the complexity of a given system is of fundamental importance. One of the principal tools for assessing such complexity is topological entropy, a quantity that measures the growth rate of distinguishable states over time up to a prescribed level of precision. In particular, systems with positive topological entropy exhibit substantially richer and more intricate dynamics than those with zero entropy, for which the number of distinguishable states remains essentially unchanged. Consequently, when a system depends on parameters, it becomes important to determine the parameter values at which the entropy transitions from zero to positive. A central role in this analysis is played by the non-wandering set, a set consisting of states that return arbitrarily close to their initial positions as the dynamics evolve. Such states do not "drift away," but instead display recurrent behavior and capture the essential information governing the complexity of the system. In this paper, we investigate the Lozi map family. The Lozi family is a family of piecewise affine transformations of the plane exhibiting chaotic behavior. Beyond their relevance in practical applications, such as cryptography and memristor modeling, they also serve as an important model for the study of other piecewise affine systems as well as the Hénon family, one of the most extensively studied classes of dynamical systems. For a broad range of parameters, we determine the non-wandering set of the Lozi map and show that it is closely connected to a distinguished accumulation set. We provide a geometric description of this accumulation set by constructing a specific polygon that traps all states contained within it. Moreover, we show that, under iteration, this polygon increasingly approximates the accumulation set. As a consequence, we prove that the topological entropy of the Lozi map outside the accumulation set is equal to zero. In other words, positive entropy and dynamically complex behavior are concentrated on a geometrically defined accumulation set. Our results extend the earlier work of Micha\l{} Misiurewicz and Sonja \v{S}timac to a broader class of parameters and represent a further step toward identifying the boundary between regions of zero entropy and the onset of more intricate, chaotic dynamics in the Lozi family.}
	
	\baselineskip=18pt
	
	\section{Introduction}\label{sec:intro}

	The Lozi map is a piecewise affine counterpart to the H\'enon map: it is a planar homeomorphism given by
	\begin{equation*}
		L_{a,b} \colon \mathbb{R}^2 \rightarrow \mathbb{R}^2, \quad L_{a,b}(x,y) = (1 + y - a|x|, bx),
	\end{equation*}
and it depends on two parameters $a,b \in \mathbb{R}$, $b \neq 0$. Almost half a century after R.\ Lozi proposed this map in \cite{lozi1978attracteur}, a complete understanding of its dynamical properties still requires substantial research. One such aspect is topological entropy $h_{top}$, a core dynamical invariant that measures the exponential growth of orbits for a given system.

	A general formula for the topological entropy of the Lozi map in terms of the parameters is not known, but there are some partial results on its discontinuity and jumps (see \cite{yildiz2012discontinuity}), as well as its monotonicity along specific directions in the parameter space (see \cite{ishii1997towards2}, \cite{ishii1998monotonicity}, \cite{yildiz2011monotonicity}). Although the parameter region where $h_{top}(L_{a,b})$ is maximal has been completely determined in \cite{ishii1997towards1}, the zero entropy locus of the Lozi family is still not fully understood.
	
	In \cite{yildiz2011monotonicity}, Yildiz reviewed a result of Ishii and Sands which states that $h_{top}(L_{a,b})=0$ when $-1 \leqslant b < 0$ and $a \leqslant b - 1$, as well as when $0 < b \leqslant 1$ and $a \leqslant 1 - b$. In addition, Yildiz also shows that entropy is zero for parameters in a small neighborhood of the point $(a,b) = (1,0.5)$.
	
	In a recent paper \cite{misiurewicz2024zero}, Misiurewicz and \v{S}timac extended this last result to a larger parameter set in which there are no homoclinic points for the fixed point $X$ in the first quadrant, and the period-two orbit $\{P,P'\}$ is attracting. We denote that set $\mathfrak{R}$ and formally introduce it at the beginning of Section \ref{sec:non_wandering_zero_entropy}. Within that set, Misiurewicz and \v{S}timac observed a subset $\mathcal{R} \subset \mathfrak{R}$, in which they showed that the branches of the unstable manifold of $X$, $W_X^u$, are attracted to the set $\ell = \{P,P'\}$ because $L_{a,b}$ acts globally on these branches as an affine map. This allowed them to conclude that $h_{top}(L_{a,b}) = 0$ for all $(a,b) \in \mathcal{R}$.
	
	In this paper, we generalize this result to the set $\mathfrak{R} \setminus \mathcal{R}$, where the aforementioned dynamical behavior is disrupted. In particular, this gives rise to a more complex accumulation set $\ell$, which is also suggested by certain counterexamples that Ishii and Sands pointed out to the author in a private communication; see Section \ref{sec:concluding_remarks}. More precisely, we consider the set $\ell := \Cl W_X^u \setminus W_X^u$ of all accumulation points of $W_X^u$ which do not lie on $W_X^u$. We construct a geometric mechanism that generates $\ell$. First, we define a polygon $\mathcal{D}$ and show that $\mathcal{D}$ is an eventually trapping region for $L_{a,b}^2$ (in the sense of Definition \ref{dfn:eventually_trapping_region}). We then consider the nested sequences $(L_{a,b}^{2n}(\mathcal{D}))_{n \in \mathbb{N}_0}$ and $(L_{a,b}^{2n+1}(\mathcal{D}))_{n \in \mathbb{N}_0}$. As it turns out, the connected components of $\ell$ can be represented as intersections of these two sequences (Lemma \ref{lem:P_intersection_D'}, Corollary \ref{cor:ell_intersect_D}). This representation ultimately permits us to localize the non-wandering dynamics for $L_{a,b}^2$:
	
	\begin{theorem} \label{thm:L2_non_wandering_set}
	For all $(a,b)\in\mathfrak{R}\setminus\mathcal{R}$, the non-wandering set of $L_{a,b}^2$ is contained in the union of $\ell$ and the fixed points of $L_{a,b}$, that is,
	\begin{equation*}
		\Omega(L_{a,b}^2) \subseteq \ell \cup \{X,Y\}.
	\end{equation*}
	\end{theorem}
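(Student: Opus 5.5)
The plan is to partition the plane into regions whose forward $L_{a,b}^2$-orbits either leave every compact set, converge to a fixed point, or enter the polygon $\mathcal{D}$. In each case we show that a non-wandering point must lie in $\ell\cup\{X,Y\}$. We use the structure announced in the introduction. First, $\mathcal{D}$ is eventually trapping for $L_{a,b}^2$ (Definition \ref{dfn:eventually_trapping_region}): some iterate $L_{a,b}^{2N}(\mathcal{D})$ lies in $\Int\mathcal{D}$, or at least in $\mathcal{D}$ with boundary pushed inside. Second, the nested intersections $\bigcap_n L_{a,b}^{2n}(\mathcal{D})$ and $\bigcap_n L_{a,b}^{2n+1}(\mathcal{D})$ recover the components of $\ell$ (Lemma \ref{lem:P_intersection_D'}, Corollary \ref{cor:ell_intersect_D}).

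\emph{Step 1 (inside the trapping region).} Let $z\in\Omega(L_{a,b}^2)$ lie in the forward-invariant set $\mathcal{D}_N:=L_{a,b}^{2N}(\mathcal{D})$, or in its odd counterpart. Suppose $z\notin\bigcap_n L_{a,b}^{2n}(\mathcal{D})$. Then $z\notin \mathcal{D}_{N+k}$ for some $k$. Because the sets are nested and $L_{a,b}^{2k}(\mathcal{D}_N)\subseteq \mathcal{D}_{N+k}$, and because closedness plus strict trapping give $L_{a,b}^{2}(\mathcal{D}_{N})\subset\Int \mathcal{D}_N$, there is a neighbourhood $U$ of $z$ disjoint from the closed set $\mathcal{D}_{N+k}$. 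All iterates $L_{a,b}^{2m}(U\cap \mathcal{D}_N)$ with $m\ge k$ lie in $\mathcal{D}_{N+k}$. The points of $U$ outside $\mathcal{D}_N$ are handled by Step 2. The finitely many iterates with $m<k$ are handled by shrinking $U$, since $z$ is not periodic unless it is a fixed point; the fixed points $X,Y$ are allowed. Hence $z$ is wandering, and the non-wandering part of $\mathcal{D}_N$ lies in $\ell$.

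\emph{Step 2 (outside $\mathcal{D}$).} The complement of $\mathcal{D}$ has to be handled using the explicit geometry of the parameter set $\mathfrak{R}$. I would show that every point outside $\bigcup_n L_{a,b}^{-2n}(\mathcal{D})$ has an orbit that either escapes to infinity monotonically, or lies on the stable manifold of $X$ or $Y$ (or on $W^u_X$) and converges to a fixed point. A Lyapunov-type argument is enough here: in the relevant quadrants the map is affine, and a coordinate function such as $x$ or $x+y$ is strictly monotone along orbits away from the fixed points. Such a strict monotone quantity rules out recurrence except at $X$ and $Y$. Points that do enter $\mathcal{D}$ after finitely many steps reduce to Step 1, since $\Omega(L^2)$ is invariant and $L^2(\Omega)=\Omega$.

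\emph{Step 3 (points of $W^u_X$).} Points of $W^u_X\setminus\{X\}$ are wandering. Since $X$ has no homoclinic points in $\mathfrak{R}$, a small arc $J\ni z$ of $W_X^u$ goes under forward iteration to arcs accumulating only on $\ell$. Under backward iteration it goes to arcs shrinking to $X$. A neighbourhood returning to itself would therefore force a homoclinic intersection via a $\lambda$-lemma-type argument for the piecewise affine map, and none exists. Together with $\Cl W^u_X=W_X^u\cup\ell$ this finishes the proof.

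The main obstacle is Step 2: showing that the basin of $\mathcal{D}$ together with the escaping and stable-manifold regions really exhausts the plane for all $(a,b)\in\mathfrak{R}\setminus\mathcal{R}$. I would first attempt it by dividing the plane along $W^s_X$, $W^s_Y$ and the line $x=0$ and checking the affine dynamics on each piece.
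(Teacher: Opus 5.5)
The genuine gap is Step 2, and it is not a side issue. It is essentially the whole content of the theorem outside $\mathcal{D}\cup L_{a,b}(\mathcal{D})$, and you leave it as a plan. That plan cannot work as sketched. No linear coordinate $\phi(p)=c\cdot p$ can be monotone along orbits in any neighbourhood of the saddle $Y$: there the increment $\phi(L_{a,b}(p))-\phi(p)$ is a nonzero linear form in $p-Y$, since $1$ is not an eigenvalue of $DL_{a,b}$ at $Y$. The orbits you must control, those near $W_Y^s$, pass arbitrarily close to $Y$.

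More fundamentally, even a proven pointwise dichotomy ``escapes to infinity, or converges to $X$ or $Y$'' would not give wandering. Being non-wandering is a property of neighbourhoods, not of single orbits. Points of $W_Y^s$ converge to $Y$, yet transverse homoclinic points of $Y$, if any existed, would be non-wandering. Nothing in your outline rules these out, or any other recurrent set on the boundary of the basin of $\mathcal{D}\cup L_{a,b}(\mathcal{D})$.

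The same confusion appears in Step 1, where the points of $U\setminus\mathcal{D}_N$ are ``handled by Step 2''. Such points typically lie in the basin of $\mathcal{D}$ (e.g.\ in $L_{a,b}^{-2}(\mathcal{D}_N)$), not outside it, and what you need is that their orbits never re-enter $U$. Inside $\mathcal{D}$ this is repairable using the fundamental domains $L_{a,b}^{2j}(\mathcal{K})$, which have pairwise disjoint interiors, cf.\ (\ref{eq:polygon_D_iterates}). Also, $L_{a,b}^2(\mathcal{D}_N)\subset\Int\mathcal{D}_N$ is false, since $\partial\mathcal{D}_N$ and $\partial\mathcal{D}_{N+1}$ share an arc of $W_X^u$.

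The missing idea is the one the paper uses to avoid any dynamical analysis of the complement of $\mathcal{D}$: the Brouwer plane translation theorem. The set $\mathcal{N}=W_X^u\cup\ell\cup W_X^{s+}\cup W_Y^{u+}\cup\{\infty\}$ is compact, connected, invariant under $L_{a,b}^{\pm1}$, and does not separate the sphere. Its complement $U'$ is invariant and path-connected (Lemma \ref{lem:U'_pathwise}, proved with the polygons $L_{a,b}^{2j}(\mathcal{K})$ and the isolating neighbourhoods of Corollary \ref{cor:eps}), hence an open topological disc. The only fixed points of $L_{a,b}^2$ are $X,Y,P,P'$, and $P,P'\in\ell$ by Corollary \ref{cor:P_in_Ell}. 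So $L_{a,b}^2$ restricted to $U'$ is a fixed-point-free, orientation-preserving homeomorphism of a plane, and Corollary \ref{cor:BPTT_corollary} makes every point of the open set $U'$ wandering.

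This disposes at once of the points of $\mathcal{D}\setminus(W_X^u\cup\ell)$, the escaping points, the points of $W_Y^s$, and everything else off $\mathcal{N}$. Only the arcs $W_X^u\setminus\{X\}$, $W_X^{s+}\setminus\{X\}$ and $W_Y^{u+}\setminus\{Y\}$ remain to be checked. Without this global topological step, or a complete neighbourhood-level description of the dynamics outside the basin of $\mathcal{D}\cup L_{a,b}(\mathcal{D})$ (including excluding homoclinic points of $Y$), your argument does not close.
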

	
Such structure of the non-wandering set gives an immediate consequence for the topological entropy of $L_{a,b}$: we isolate all entropy-producing dynamics inside a geometrically defined limit set $\ell$. 

	\begin{cor} \label{cor:entropy_gothR}
	For all parameters $(a,b)\in\mathfrak{R}\setminus\mathcal{R}$, we have $h_{top}(L_{a,b}\restrict{\mathbb{R}^2\setminus\ell})=0$.
	\end{cor}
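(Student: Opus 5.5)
The plan is to deduce the corollary from Theorem \ref{thm:L2_non_wandering_set}, using the variational principle and the fact that entropy is carried by the non-wandering set. Set $F := L_{a,b}\restrict{\mathbb{R}^2\setminus\ell}$. Since $W_X^u$ is invariant and $L_{a,b}$ is a homeomorphism, $\Cl W_X^u$ is invariant, hence so is $\ell = \Cl W_X^u\setminus W_X^u$. Therefore $\mathbb{R}^2\setminus\ell$ is an invariant open set and $F$ is a homeomorphism of it. Moreover $h_{top}(F^2)=2h_{top}(F)$, so it is enough to show $h_{top}(F^2)=0$.

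The space $\mathbb{R}^2\setminus\ell$ is not compact, so I would take the entropy in the sense the paper uses for non-compact spaces. The natural choice is the supremum over compact invariant subsets, or equivalently Bowen's definition via compact sets. By the variational principle on each compact invariant set $K$, this equals the supremum of $h_\mu(F^2)$ over $F^2$-invariant Borel probability measures $\mu$ supported in $\mathbb{R}^2\setminus\ell$. One could also work in the one-point compactification, adding a fixed point at infinity, which contributes no entropy.

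By Poincaré recurrence, every such $\mu$ is supported on the closure of the recurrent points of $F^2$. Recurrent points are non-wandering for $L_{a,b}^2$, so by Theorem \ref{thm:L2_non_wandering_set} they lie in $\Omega(L_{a,b}^2)\setminus\ell\subseteq\{X,Y\}$. Note that non-wandering for $F^2$ implies non-wandering for $L_{a,b}^2$, since $\mathbb{R}^2\setminus\ell$ is open. Hence every invariant measure is a convex combination of $\delta_X$ and $\delta_Y$, each of which has zero metric entropy. This gives $h_{top}(F^2)=0$, and therefore $h_{top}(F)=0$.

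The main obstacle is the non-compactness: I have to make sure the entropy notion used for $F$ obeys a variational principle, or equivalently reduces to compact invariant pieces. If it is defined via compact invariant subsets, the argument above is immediate. If it is defined via Bowen's definition on arbitrary compact sets, I would instead invoke the general fact that the Bowen entropy of a map equals the entropy of its restriction to the non-wandering set. That fact holds for the homeomorphism $L_{a,b}$ of $\mathbb{R}^2$ compactified by a point at infinity.
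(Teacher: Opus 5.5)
Your main line is correct and is essentially the paper's argument. The paper notes that, by Theorem \ref{thm:L2_non_wandering_set}, the non-wandering set of $L_{a,b}^2|_{\mathbb{R}^2\setminus\ell}$ is $\{X,Y\}$, applies (\ref{eq:entropy_non_wandering_restriction}), and halves. Your Poincar\'e recurrence plus variational principle step is one standard proof of (\ref{eq:entropy_non_wandering_restriction}), written out. It is valid under the one-point compactification of $\mathbb{R}^2\setminus\ell$, which collapses $\ell\cup\{\infty\}$ to a single fixed point; this is the convention the paper implicitly uses. You assert that $\mathbb{R}^2\setminus\ell$ is open without justification: this needs $\ell$ closed, which follows from Corollary \ref{cor:ell_intersect_D}.

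Two things in your non-compactness discussion are wrong.

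\textbf{The claimed equivalence.} The supremum over compact \emph{invariant} subsets is not ``equivalently'' Bowen's entropy over arbitrary compact subsets.

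\textbf{The fallback in your last paragraph.} The identity $h_{top}(f)=h_{top}(f|_{\Omega(f)})$ is about a map of a whole compact space. Applied to $L_{a,b}$ on the compactified plane, it only says that the entropy lives on $\Omega(L_{a,b})$, which includes the part of $\ell$ you removed. It gives no control over the Bowen entropy, in the inherited Euclidean metric, of a non-invariant compact set $K\subset\mathbb{R}^2\setminus\ell$. The orbits of such a $K$ can shadow the dynamics on the set they are attracted to.

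A simple counterexample: take $f(\omega,t)=(\sigma\omega,t/2)$ on $\{0,1\}^{\mathbb{Z}}\times[0,1]$, with $\sigma$ the shift. Every point off $\{0,1\}^{\mathbb{Z}}\times\{0\}$ is wandering. Yet $K=\{0,1\}^{\mathbb{Z}}\times\{1\}$ has Bowen entropy $\log 2$.

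Here $\ell$ can carry positive entropy (Section \ref{sec:concluding_remarks}). So the Bowen reading of the statement is not established by your argument, nor by the paper's. Fix the compactification or invariant-measure convention explicitly and drop the Bowen branch.
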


	As already stated, there are examples of parameters $(a,b) \in \mathfrak{R} \setminus \mathcal{R}$ indicating that the Lozi map can have positive entropy in that parameter region. Therefore, $\ell$ can, in general, have a nontrivial structure and contain more points than the periodic orbit $\{P,P'\}$. This intricate structure of $\ell$ depends on the specific dynamical behavior of $L_{a,b}$ and the unstable manifold $W_X^u$ and thus merits its own separate study. However, in this paper, we construct a geometric mechanism that generates $\ell$ and localizes the dynamical complexity of $L_{a,b}$ to this set. Moreover, notice that this construction of an eventually trapping region suggests that $\ell$ behaves like an attractor. Still, we cannot draw an analogy with the strange attractors from the Misiurewicz parameter set introduced in \cite{misiurewicz1980strange}: in our case, $\ell$ is the $\omega$-limit set of $W_X^u$ but does not contain it, since, by Corollary \ref{cor:eps}, every compact subarc of $W_X^u$ has a neighborhood in which it is the only part of $W_X^u$. Hence, this new, invariant set $\ell$ motivates a potential new study of another class of attractors that could arise in the Lozi family.
	
	This paper is organized as follows: in Section \ref{sec:prelim}, we give an overview of relevant notions and results from topological dynamics and the dynamics of the Lozi map, together with the notation. Section \ref{sec:non_wandering_zero_entropy} consists of two parts. In Subsection \ref{subsec:accumulation_set_ell}, we construct an eventually trapping region $\mathcal{D}$ and use it to characterize the accumulation set $\ell$. We use this characterization to prove our main theorem and its consequence in Subsection \ref{subsec:main_result}. Finally, we provide some concluding remarks in Section \ref{sec:concluding_remarks}.
	
	This paper is based on Chapter 3 of the author's PhD dissertation \cite{kilassa2022tangential}, where preliminary versions of these results first appeared.

	\section{Preliminaries}\label{sec:prelim}
	
	\subsection{Topological dynamics}\label{subsec:dynsys_general_notions}
	
	Let $\mathcal{X}$ be a topological space and $f \colon \mathcal{X} \rightarrow \mathcal{X}$ a continuous function. For $n \in \mathbb{N}$, let $f^n = f \circ f \circ \ldots \circ f$ ($n$ times). In particular, $f^0$ is the identity map. In addition, if $f$ is invertible, let $f^{-n} = f^{-1} \circ f^{-1} \circ \ldots \circ f^{-1}$ ($n$ times). A set $\mathcal{Y} \subseteq \mathcal{X}$ is said to be $f$-\emph{invariant} if $f(\mathcal{Y}) \subseteq \mathcal{Y}$.
	
	For a point $x \in \mathcal{X}$, the \emph{forward orbit} of $x$ is the set $\mathcal{O}^{+}(x,f)=\{f^n(x) \colon n \in \mathbb{N}_0\}$. If $f$ is invertible, the \emph{backward orbit} of $x$ is $\mathcal{O}^{-}(x,f) = \{f^{-n}(x) \colon n \in \mathbb{N}_0\}$, while the \emph{full orbit} of $x$ is $\mathcal{O}(x,f) = \mathcal{O}^{+}(x,f) \cup \mathcal{O}^{-}(x,f)$.
	
	We say that a point $x \in \mathcal{X}$ is a \emph{non-wandering point} if for every neighborhood $\mathcal{U}$ of $x$, there exists $n \in \mathbb{N}$ such that $f^{n}(\mathcal{U})\cap \mathcal{U} \neq \emptyset$. The set of all non-wandering points will be denoted by $\Omega(f)$. If $f$ is a homeomorphism, we know that $\Omega(f)$ is both $f$-invariant and $f^{-1}$-invariant. A point that does not lie in $\Omega(f)$ is called a \emph{wandering point} for $f$.
	
	Furthermore, for a subset $\mathcal{A} \subset \mathcal{X}$, the $\omega$-\emph{limit set} of $\mathcal{A}$ is defined by
	\begin{equation*}
	\begin{split}
	\omega(\mathcal{A},f) & = \bigcap_{n \in \mathbb{N}} \Cl\bigcup_{k \geqslant n}\{f^k(a) \colon a \in \mathcal{A}\} \\ 
				& = \{ x \in \mathcal{X} \colon \exists \text{ a sequence } (n_k)_k\subseteq\mathbb{N}_0 \text{ such that } n_k \rightarrow \infty \\
				& \qquad\qquad\text{ and a sequence } (a_k)_k \subseteq \mathcal{A} \text{ such that } f^{n_k}(a_k) \xrightarrow{k \rightarrow \infty} x \},
	\end{split}
	\end{equation*}
	where $\Cl \mathcal{Y}$ is the topological closure of a set $\mathcal{Y} \subseteq \mathcal{X}$. The set $\omega(\mathcal{A},f)$ is closed, $f$-invariant and $f^{-1}$-invariant if $f$ is a homeomorphism. 
	
	 Let $\mathcal{X}$ now be a compact metric space with a metric $d$. Moreover, let $n$ be a positive integer and $\varepsilon>0$. A finite set $\mathcal{S}\subseteq \mathcal{X}$ is called an $(n,\varepsilon)$-\emph{separated set} if for any two distinct points $x,y \in \mathcal{S}$, there exists $i \in \{0,1,\ldots,n-1\}$ such that $d(f^{i}(x),f^{i}(y)) > \varepsilon$. Let $s(n,\varepsilon)$ denote the maximal cardinality of an $(n,\varepsilon)$-separated set. The \emph{topological entropy} $h_{top}(f)$ of $f$ is defined as
	\begin{equation*}
	 h_{top}(f) = \lim_{\varepsilon \rightarrow 0^{+}} \limsup_{n \rightarrow \infty} \frac{1}{n}\log(s(n,\varepsilon)).
	\end{equation*}	    
	Here, $s(n,\varepsilon)$ can be interpreted as the number of orbit segments of length $n$ that one can detect up to precision $\varepsilon$. Therefore, $h_{top}(f)$ estimates the average exponential growth of the number of detectable orbit segments. It is also well-known that for every $n \in \mathbb{N}$, we have $h_{top}(f^n)=nh_{top}(f)$ (see, e.g., Proposition 2.5.5 on page 39 in \cite{brin2002introduction}). In addition, by Equation (3.3.1) on page 130 in \cite{katok1995introduction}, we have that 
	\begin{equation} \label{eq:entropy_non_wandering_restriction}
	h_{top}(f)=h_{top}(f \restrict{\Omega(f)}).
	\end{equation}
	
	\begin{rem}
	In this paper, we are interested in the topological entropy of the Lozi map $L_{a,b}$. This map is defined on $\mathbb{R}^2$, which is not a compact set. Therefore, as in \cite{yildiz2012discontinuity}, we take the one-point compactification of $\mathbb{R}^2$ and extend $L_{a,b}$ continuously to that set. 
	\end{rem} 
	
	Finally, we state a classic result for homeomorphisms of the plane as it was formulated (and proved) in \cite{franks1992new}. For a homeomorphism $f \colon \mathbb{R}^2 \rightarrow \mathbb{R}^2$, a \emph{domain of translation} for $f$ is an open, connected subset of $\mathbb{R}^2$ whose boundary is $p \cup f(p)$, where $p$ is a properly embedded line in $\mathbb{R}^2$, and $p$ separates $f^{-1}(p)$ and $f(p)$.
	
	\begin{theorem}[Brouwer Plane Translation Theorem, BPTT]
	Let $f \colon \mathbb{R}^2 \rightarrow \mathbb{R}^2$ be a fixed-point-free, orientation-preserving homeomorphism of the plane. Then every point of the plane is contained in a domain of translation for $f$. 
	\end{theorem}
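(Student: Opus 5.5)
This theorem is classical, and the paper simply cites it from \cite{franks1992new}. If I had to prove it, I would follow the modern ``free brick decomposition'' route of Guillou, Sauzet and Le Calvez, with Franks' lemma as the engine. The key input is \emph{Franks' lemma}: if a fixed-point-free, orientation-preserving homeomorphism $f$ of $\mathbb{R}^2$ admits a periodic chain of pairwise disjoint free open disks $U_1,\dots,U_n$, then $f$ has a fixed point. Here \emph{free} means $f(U_i)\cap U_i=\emptyset$, and the chain condition is $f^{k_i}(U_i)\cap U_{i+1}\neq\emptyset$, indices mod $n$. I would first prove this lemma by reducing to a single free disk returning to itself, and then to Brouwer's lemma on translation arcs: if a translation arc $\gamma$ (from $z$ to $f(z)$ with $f(\gamma)\cap\gamma=\{f(z)\}$) satisfies $f^k(\gamma)\cap\gamma\neq\emptyset$ for some $k\ge 2$, then $f$ has a fixed point. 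That lemma is proved by building a closed curve out of iterates of $\gamma$ and computing its index as $1$. Since $f$ has no fixed point, every point has a neighbourhood that is a free disk, and no periodic free-disk chain exists.

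\textbf{Step 1 (brick decomposition).} I would take a locally finite decomposition of $\mathbb{R}^2$ into closed topological disks (``bricks'') meeting along edges, with every vertex of degree $3$. Each brick must be free, and the decomposition must be maximal among free decompositions, so that the union of two adjacent bricks is no longer free. This is where fixed-point-freeness enters, through a compactness argument on each compact set. On bricks I put the relation $\beta\to\beta'$ iff $f(\beta)\cap\beta'\neq\emptyset$, and take its transitive closure $\preceq$. Franks' lemma, applied to chains of interiors of bricks, gives that $\preceq$ is a strict partial order, so there are no closed chains.

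\textbf{Step 2 (Brouwer line through a given point).} Fix $x\in\mathbb{R}^2$ and choose the decomposition so that $x$ lies in the interior of a brick $\beta_0$. Let $A=\bigcup\{\beta:\beta_0\preceq\beta,\ \beta\neq\beta_0\}$ be the attractor generated by $f(\beta_0)$. Then $f(A)\subset\operatorname{Int}A$, and absence of cycles gives $\beta_0\not\subset A$. Set $A'=A\cup\beta_0$. One checks that $f(A')\subset \operatorname{Int} A'$ and $f^{-1}(\mathbb{R}^2\setminus \operatorname{Int}A')\subset \mathbb{R}^2\setminus A'$. One also checks that $\partial A'$ is a union of brick edges passing through $\beta_0$; the edges can be perturbed so that the boundary passes through $x$ itself.

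\textbf{Step 3 (topology of the boundary).} This is the main obstacle. I must show that some component $p$ of $\partial A'$ through $x$ is a properly embedded line. It should not be a closed curve, and $\partial A'$ should be connected, or at least the relevant component should separate correctly. Maximality and the degree-$3$ condition make $\partial A'$ a $1$-manifold. A compact component would bound a disk $\Delta$ with $f(\Delta)\subset\Delta$ or $f^{-1}(\Delta)\subset\Delta$, and Brouwer's fixed point theorem then contradicts freeness. To get connectedness, I would first replace $A'$ by the connected component of $A'$ containing $\beta_0$, after filling in the bounded complementary components. I would then use Franks' lemma again to rule out the case that $f$ maps one boundary component into a region bounded by another. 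Once $p$ is a line with $f(p)\subset\operatorname{Int}A'$ and $f^{-1}(p)\subset\mathbb{R}^2\setminus A'$, the curve $p$ separates $f^{-1}(p)$ from $f(p)$. Finally, the open region between $p$ and $f(p)$ is connected, has boundary $p\cup f(p)$, and contains $x$ once $p$ is pushed slightly backward so that $x$ lies strictly between $p$ and $f(p)$. Hence it is a domain of translation containing $x$.
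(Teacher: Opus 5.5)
The paper does not prove this theorem. It quotes it from Franks \cite{franks1992new} and only uses its consequence, Corollary~\ref{cor:BPTT_corollary}. Your outline via maximal free brick decompositions (in the spirit of Sauzet and Le Calvez) is therefore an independent route and has to stand on its own. Steps~1--2 are reasonable in outline, with one caveat. For closed bricks, $f(\beta)\cap\beta'\neq\emptyset$ may come from boundary contact only, so Franks' lemma for pairwise disjoint open disks does not apply verbatim to interiors. You need a closed-brick version or a small perturbation of $f$.

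The genuine gap is Step~3, which carries the whole content of the theorem. The only dynamical input you use there is $f(A')\subset\Int A'$ plus freeness, and these alone do not give a Brouwer line. Two examples with $f(x,y)=(x+1,y)$ show this:
\begin{itemize}
\item For $A'=\{x\geq 0\}\setminus B_{1/4}((\tfrac12,0))$ one has $f(A')\subset\Int A'$, yet the circle component bounds a disk $\Delta$ with $f(\Delta)\cap\Delta=\emptyset$. So your dichotomy ``$f(\Delta)\subset\Delta$ or $f^{-1}(\Delta)\subset\Delta$'' does not follow.
\item After filling holes there may be several line components, and a given one need not be a Brouwer line. For $\hat A=\{x\geq -\tfrac14\}\setminus\{0<x<\tfrac12,\ y>0\}$ one has $f(\hat A)\subset\Int\hat A$, with $\hat A$ connected and hole-free. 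But for $p=\partial\{0<x<\tfrac12,\ y>0\}$, both $f(p)$ and $f^{-1}(p)$ lie on the same side of $p$.
\end{itemize}
``Use Franks' lemma again'' names no periodic disk chain, so this step is missing rather than sketched. Perturbing edges so that $\partial A'$ passes through $x$ is also unjustified, since it can destroy freeness and maximality, and it is unnecessary.

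The missing idea is the right choice of boundary component, which comes from maximality.
\begin{itemize}
\item By maximality, every brick adjacent to $\beta_0$ is a direct successor or a direct predecessor of $\beta_0$. If $\beta_0$ has finitely many neighbours $\beta_1,\dots,\beta_k$ (e.g.\ if $\beta_0$ is compact), both kinds occur.
\item To see this, suppose all neighbours were successors, and let $V=\beta_0\cup\beta_1\cup\dots\cup\beta_k$. Then $f(V)\cap\beta_0=\emptyset$ while $f(\beta_0)\subset\Int f(V)$. Each $\beta_j$ meets both $\beta_0$ and $f(\beta_0)$, so it meets $\partial f(V)\subset\bigcup_{i\geq1}f(\beta_i)$. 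Hence $\beta_i\to\beta_j$ for some $i\neq j$. Following these arrows backwards in a finite set gives a cycle, a contradiction. The case of all predecessors is symmetric.
\item Take a predecessor $\beta_p\to\beta_0$ sharing an edge $e$ with $\beta_0$, and let $p$ be the component of $\partial A'$ containing $e$. Let $R$ be the side of $p$ on which $A'$ lies near $p$, and $L$ the other side.
\item The set $f^{-1}(\beta_0)$ is connected and disjoint from $A'$ (no cycles). It meets $\beta_p\subset\Cl L$, so $f^{-1}(\beta_0)\subset L$ and hence $f^{-1}(p)\subset L$.
\item For a successor neighbour $\beta_s$, the connected set $\beta_0\cup\beta_s\subset A'$ lies in $\Cl R$. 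Since $f(\beta_0)\subset\Int A'$ meets $\beta_s$ off $p$, we get $f(\beta_0)\subset R$ and hence $f(p)\subset R$.
\item If $p$ were a circle bounding $\Delta$, this would force $f(\Delta)\subset\Int\Delta$ or $f^{-1}(\Delta)\subset\Int\Delta$, contradicting Brouwer's fixed point theorem. So $p$ is a line.
\item Then $R\cap f(L)$ is a domain of translation. It contains $\Int\beta_0\ni x$, because $\Int\beta_0\subset R$ and $\beta_0\subset f(L)$.
\end{itemize}
With this choice you need neither connectedness of $A'$, nor filling of holes, nor a second application of Franks' lemma.
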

	
	In this paper, we will use the following consequence of BPTT and Corollary 1.3 in \cite{franks1992new}; the proof is omitted.
	
	\begin{cor} \label{cor:BPTT_corollary}
	If $f \colon \mathbb{R}^2 \rightarrow \mathbb{R}^2$ is a fixed-point-free, orientation-preserving homeomorphism of the plane, then every point in the plane is a wandering point for $f$.
	\end{cor}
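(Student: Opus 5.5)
The plan is to derive the statement directly from BPTT. The aim is to produce, around an arbitrary point $x\in\mathbb{R}^2$, an open neighborhood $\mathcal{U}$ with $f^n(\mathcal{U})\cap\mathcal{U}=\emptyset$ for every $n\in\mathbb{N}$. By BPTT, $x$ lies in a domain of translation $\mathcal{U}$ for $f$. This is an open connected set whose boundary is $p\cup f(p)$, where $p$ is a properly embedded line separating $f^{-1}(p)$ from $f(p)$. So $\mathcal{U}$ is already an open neighborhood of $x$, and what remains is to show that its forward iterates never return to it.

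To do this, I would use the Jordan-type separation given by a properly embedded line. The set $\mathbb{R}^2\setminus p$ has exactly two components. Let $H$ be the one containing $f(p)$; since $p$ separates $f^{-1}(p)$ from $f(p)$, $f^{-1}(p)$ lies in the other component. I then want to establish the key invariance
\[
f(\Cl H)\subseteq H .
\]
Since $f$ is a homeomorphism, $f(H)$ is a component of $\mathbb{R}^2\setminus f(p)$ and $f(p)\subset H$. One of the two components of $\mathbb{R}^2\setminus f(p)$ contains $p$; this one contains $\mathbb{R}^2\setminus H$ but not $H$, and the other is contained in $H$. Applying $f^{-1}$, the fact that $f^{-1}(p)\not\subset \Cl H$ should force $f(H)$ to be the second component, so $f(H)\subsetneq H$. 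Orientation preservation is the hypothesis I expect to rule out the alternative, in which $f$ would swap the sides.

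Given this invariance, $\mathcal{U}=H\setminus\Cl f(H)$ is the strip between $p$ and $f(p)$. Then $f^n(\mathcal{U})\subseteq f^n(H)\subseteq f(H)$ for all $n\ge1$, and $f(H)$ is disjoint from $\mathcal{U}$. Hence $f^n(\mathcal{U})\cap\mathcal{U}=\emptyset$, so $x$ is wandering.

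The main obstacle is the invariance step, namely identifying $\mathcal{U}$ with the strip and verifying $f(H)\subseteq H$. This requires checking carefully which side of $f(p)$ contains $p$, and this is where the hypotheses on $f$ genuinely enter. Rather than redoing this plane-topology argument, I would first try to read it off Corollary~1.3 in \cite{franks1992new}. That result, as I understand it, states that an orientation-preserving homeomorphism of the plane with a non-wandering point (in fact, with a periodic disk chain) has a fixed point. The statement above is then the contrapositive: if $f$ has no fixed point, it has no non-wandering point. I would use this citation as the primary argument and the translation-domain computation only as a self-contained justification.
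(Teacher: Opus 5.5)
Your proposal is correct and follows the paper, which omits the proof and simply invokes BPTT together with Corollary~1.3 of \cite{franks1992new}; your self-contained strip argument is also sound. One minor remark on that argument: $f(H)\subsetneq H$ already follows from the separation property alone, since if $f(H)$ were the component of $\mathbb{R}^2\setminus f(p)$ containing $p$, then $H=f^{-1}(f(H))$ would contain $f^{-1}(p)$. So orientation preservation enters only through BPTT, and for the wandering conclusion you need only the easy inclusion $\mathcal{U}\subseteq H\setminus\Cl f(H)$, not equality.
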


	\subsection{Lozi map}\label{subsec:lozi_maps}
	
	We recall here some definitions and facts already explained in \cite{kilassa2024tangential}. As already stated there, to study the dynamics of the orientation-reversing Lozi map $L_{a,b}$, one can observe parameter pairs $(a,b)$ such that $0 < b < 1$ and $a + b > 1$.

	For all such parameter pairs, $L_{a,b}$ has two hyperbolic saddle fixed points, $X=\bigl(\frac{1}{1+a-b},\:\frac{b}{1+a-b}\bigr)$ in the first and $Y=\bigl(\frac{1}{1-a-b},\:\frac{b}{1-a-b}\bigr)$ in the third quadrant. The eigenvalues of the differential of $L_{a,b}$ at $X$ are
	\begin{equation*}
	\lambda_X^u = \tfrac{1}{2}\left(-a-\sqrt{a^2+4b}\right),\quad \lambda_X^s = \tfrac{1}{2}\left(-a+\sqrt{a^2+4b}\right),
	\end{equation*}
Observe that $\lambda_X^u<-1$, $0<\lambda_X^s<1$. Moreover, for every eigenvalue $\lambda$, the corresponding eigenvector is given by $\binom{\lambda}{b}$.

	In addition, when $1-b<a<1+b$, there are also two attracting periodic points of period two,
	\begin{equation*}
	P=\left(\frac{1+a-b}{a^2+(1-b)^2},\,\frac{b(1-a-b)}{a^2+(1-b)^2}\right),\ P'=\left(\frac{1-a-b}{a^2+(1-b)^2},\,\frac{b(1+a-b)}{a^2+(1-b)^2}\right).
	\end{equation*}
Points $P$ and $P'$ lie in the fourth and second quadrant, respectively. 

	For a point $A \in \mathbb{R}^2$ and $k \in \mathbb{Z} \setminus \{0\}$, let $A^k=L_{a,b}^k(A)$; in particular, $A^0=A$.

	Recall that the \emph{unstable manifold} of $X$ is the set $W_X^u = \{T \in \mathbb{R}^2 \colon T^{-n} \overset{n \rightarrow \infty}{\longrightarrow} X\}$, while the \emph{stable manifold} of $X$ is $W_X^s = \{T \in \mathbb{R}^2 \colon T^n \overset{n \rightarrow \infty}{\longrightarrow} X\}$. It is known that $W_X^u$ and $W_X^s$ are $L_{a,b}$-invariant and $L_{a,b}^{-1}$-invariant sets, and they contain $X$; see Figure \ref{fig:Lozi_stable_unstable_Z_V}. Here, we follow the standard convention and call these sets manifolds, as they were introduced in \cite{misiurewicz1980strange}, even though these sets are polygonal lines and thus not manifolds in the true sense of that notion.

	\begin{figure}
	\begin{center}
		\includegraphics[width=\linewidth]{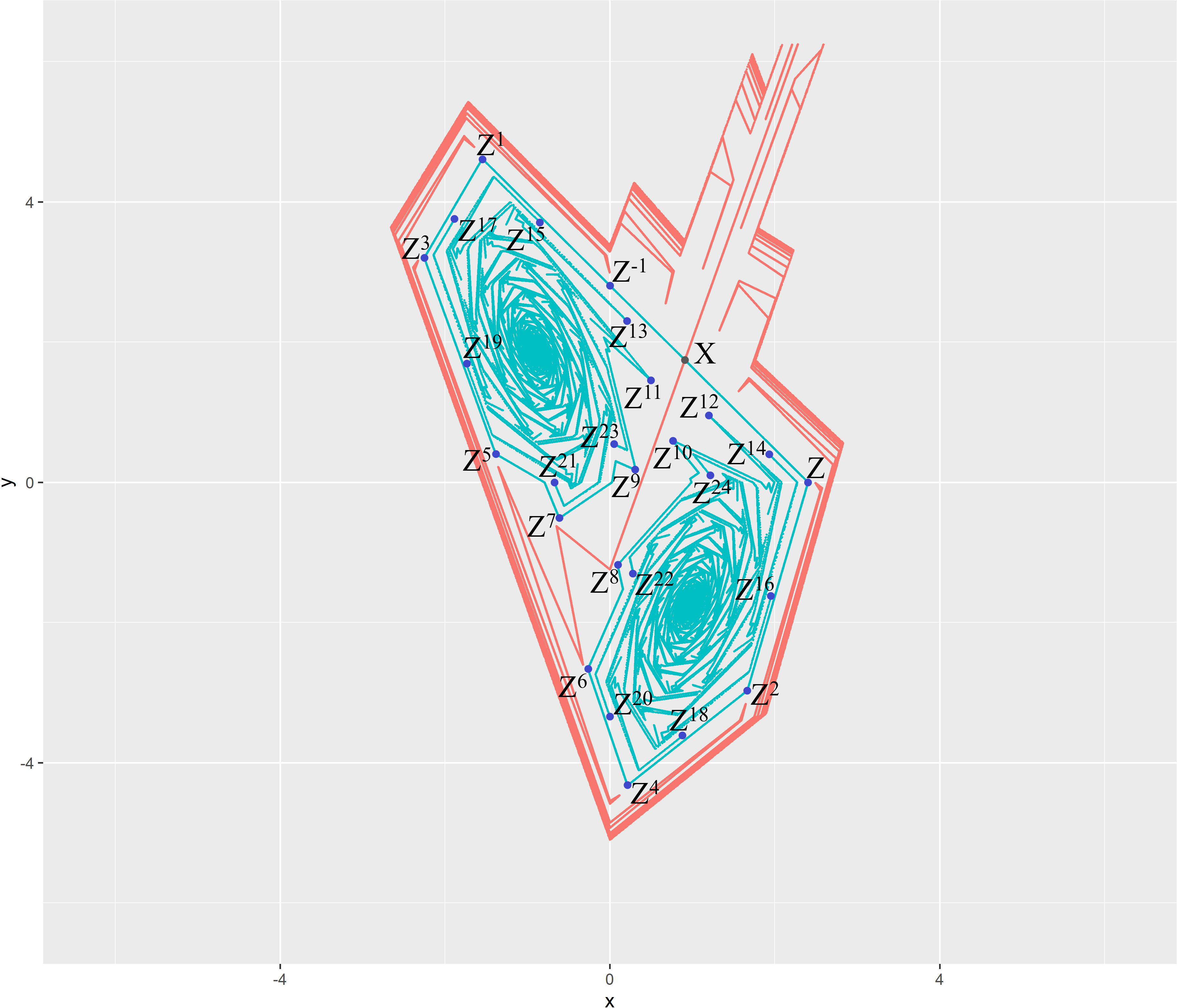}
	\end{center}
	\caption{The stable (red) and unstable (blue) manifold of $X$ for parameter values $a=1.06$, $b=0.96$, together with some iterates of $Z$.\label{fig:Lozi_stable_unstable_Z_V}}
	\end{figure}
	
	Observe the unstable manifold $W_X^u$. We denote the half of that manifold that starts at $X$ and goes to the right by $W_X^{u+}$. That half intersects the positive $x$-axis for the first time at the point
	\begin{equation}
	Z=\left(\frac{2+a+\sqrt{a^2+4b}}{2(1+a-b)},\: 0\right)=\left(\frac{2}{2+a-\sqrt{a^2+4b}},\: 0\right).
	\label{eq:Z}
	\end{equation}
We denote the other half, starting at $X$ and going to the left, by $W_X^{u-}$. Note that
	\begin{equation*}
	W_X^{u+} = \{X\}\cup\bigcup_{n=-\infty}^{\infty}L_{a,b}^{2n}\bigl(\overline{ZZ^2}\bigr) ,\quad W_X^{u-} = \{X\}\cup\bigcup_{n=-\infty}^{\infty}L_{a,b}^{2n}\bigl(\overline{Z^{-1}Z^1}\bigr).
	\end{equation*}
Similarly, we denote by $W_X^{s-}$ the half of the stable manifold $W_X^s$ that starts at $X$ goes down. The other half of $W_X^s$ is a half-line emanating from $X$ and going up in the first quadrant. We denote that part by $W_X^{s+}$.
	
	\begin{lem} \label{lem:stable_iterates_1st_quadrant}
	Let $A$ be a point in the first quadrant below the segment $\overline{ZZ^{-1}}$ and such that $A \notin W_X^s$. Then there exists $m \in \mathbb{N}$ such that $A^{i}$ lies in the first quadrant for all $i \in \{0,1,\ldots,m-1\}$, and $A^{m}$ lies in the second quadrant.
	\end{lem}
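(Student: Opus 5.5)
The plan is to exploit the fact that on the closed right half-plane $\{x\geqslant 0\}$ the Lozi map is affine, $L_{a,b}(x,y)=(1+y-ax,\,bx)$, with unique fixed point $X$ and linear part having eigenvalues $\lambda_X^u<-1$ and $0<\lambda_X^s<1$, eigenvectors $\binom{\lambda_X^u}{b}$ and $\binom{\lambda_X^s}{b}$. As long as the orbit of $A$ stays in the first quadrant, its iterates are therefore given by the affine dynamics. So I would decompose
\begin{equation*}
A-X=\alpha\binom{\lambda_X^u}{b}+\beta\binom{\lambda_X^s}{b}.
\end{equation*}
The part of $W_X^s$ near $X$ in the first quadrant is exactly the line through $X$ in the stable direction, consisting of $W_X^{s+}$ and the initial segment of $W_X^{s-}$. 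Any point of this line in the first quadrant iterates affinely to $X$, so it lies on $W_X^s$. Hence the hypothesis $A\notin W_X^s$ gives $\alpha\neq 0$. (The condition that $A$ lies below $\overline{ZZ^{-1}}$ is not needed for this step; it only places $A$ in the region considered later.)

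Suppose, for contradiction, that $A^i$ lies in the first quadrant for all $i\geqslant 0$. Then
\begin{equation*}
A^i-X=\alpha(\lambda_X^u)^i\binom{\lambda_X^u}{b}+\beta(\lambda_X^s)^i\binom{\lambda_X^s}{b}.
\end{equation*}
The stable term tends to $0$, while $|\alpha(\lambda_X^u)^i|\to\infty$ with alternating sign. The unstable eigenvector has negative $x$-component $\lambda_X^u<0$. Hence for every $i$ of the appropriate parity (those with $\alpha(\lambda_X^u)^i>0$), the $x$-coordinate of $A^i$ tends to $-\infty$, which contradicts $x\geqslant 0$. Consequently there is a first index $m\geqslant 1$ (since $A^0=A$ is in the first quadrant) with $A^m$ not in the first quadrant, and $A^i$ lies in the first quadrant for $0\leqslant i\leqslant m-1$.

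It remains to see that $A^m$ lies in the second quadrant. Since $A^{m-1}$ lies in the first quadrant, $A^m=L_{a,b}(A^{m-1})$ has $y$-coordinate $b\,x(A^{m-1})\geqslant 0$. Because $A^m$ is not in the first quadrant, its $x$-coordinate must be negative, so $A^m$ lies in the second quadrant. The only delicate point, which I expect to be the main (though minor) obstacle, is the boundary convention: what happens if $x(A^{m-1})=0$ or if some iterate lands on an axis. This is resolved by working with closed quadrants, which is legitimate because the formula $L_{a,b}(x,y)=(1+y-ax,\,bx)$ is valid on all of $\{x\geqslant 0\}$. One then checks that an iterate with $x=0$ and $y\geqslant 0$ still counts as first-quadrant, so the first exit necessarily has $x<0$ and $y\geqslant 0$.
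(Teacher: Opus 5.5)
Your proof is correct and uses the same idea as the paper's. In both arguments, while the orbit stays in the first quadrant $L_{a,b}$ acts as the affine saddle at $X$, and the nonzero unstable component is expanded by $|\lambda_X^u|>1$, which forces the orbit out; the component is nonzero because $A\notin W_X^s$. Your version is in fact slightly more complete, in two ways:
\begin{itemize}
\item Working on the whole closed first quadrant avoids the paper's unstated reliance on the fact that an orbit staying in $\mathcal{Q}_1$ stays in the triangle $OZZ^{-1}$.
\item Your observation $y(A^m)=b\,x(A^{m-1})\geqslant 0$ supplies the step the paper leaves implicit, namely why the first exit lands in $\mathcal{Q}_2$. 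Moreover, $x(A^{m-1})=0$ would give $x(A^m)=1+y(A^{m-1})>0$, so $A^m$ even lies in the open second quadrant.
\end{itemize}
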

	
	\begin{proof}
Assume, by contradiction, that all forward iterates of $A$ remain in the first quadrant below $\overline{ZZ^{-1}}$, that is, in the triangle $OZZ^{-1}$. On this region, $L_{a,b}$ is affine and has a saddle fixed point $X$. Since $A\notin W_X^s$, its orbit has a nonzero component in the unstable direction. Consequently, while the orbit remains in this region, its distance from $W_X^s$ grows by a factor of $|\lambda_X^u|>1$ at each iteration. Hence the orbit eventually leaves the triangle $OZZ^{-1}$, which is a contradiction. Therefore, there exists $m\in\mathbb{N}$ such that $A^i$ lies in the first quadrant for $i=0,\ldots,m-1$, while $A^m$ lies in the second quadrant.
	\end{proof}
	
	\begin{cor} \label{cor:stable_iterates_1st_quadrant}
	If a point $A$ is such that the forward orbit $\mathcal{O}^{+}(A,L_{a,b})$ is contained in the first quadrant below the line $\overline{ZZ^{-1}}$, then $A \in W_X^s$.
	\end{cor}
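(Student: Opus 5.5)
The corollary follows from Lemma \ref{lem:stable_iterates_1st_quadrant} by contraposition, so the plan is to check that the hypothesis of the corollary puts $A$ inside the setting of the lemma and then derive a contradiction from the lemma's conclusion.

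Suppose, for contradiction, that $\mathcal{O}^{+}(A,L_{a,b})$ is contained in the first quadrant below the line $\overline{ZZ^{-1}}$, but $A\notin W_X^s$. Since $A=A^0$ belongs to its own forward orbit, $A$ itself lies in the first quadrant below $\overline{ZZ^{-1}}$. Together with $A\notin W_X^s$, these are exactly the hypotheses of Lemma \ref{lem:stable_iterates_1st_quadrant}.

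The lemma then gives some $m\in\mathbb{N}$ such that $A^m$ lies in the second quadrant. But $A^m\in\mathcal{O}^{+}(A,L_{a,b})$, so by assumption $A^m$ is in the first quadrant. This contradicts the lemma's conclusion, and hence $A\in W_X^s$.

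The only point that needs care is the overlap of the closed quadrants along the positive $y$-axis. A point there could be counted as lying in both quadrants, which would weaken the contradiction. The lemma's proof avoids this problem, because it shows directly that the orbit must leave the triangle $OZZ^{-1}$. So the contradiction should be read as the orbit leaving the region "first quadrant below $\overline{ZZ^{-1}}$", which is precisely what the corollary's hypothesis forbids. I expect no further obstacle.
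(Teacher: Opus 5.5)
Your proposal is correct and matches the paper, which states the corollary without proof as an immediate consequence of Lemma \ref{lem:stable_iterates_1st_quadrant}; that lemma's proof by contradiction begins from exactly the corollary's hypothesis, namely that the whole forward orbit stays in the triangle $OZZ^{-1}$. Your caveat about points on the positive $y$-axis is well taken, and appealing to the lemma's proof (the orbit must leave the triangle) rather than only to its statement is the right way to settle it.
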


	\subsection{Notation}\label{subsec:notation}
	
	We will recall and slightly adjust the notation from \cite{kilassa2024tangential}, which will be used in this paper as well.
	
	The first, second, third and fourth quadrant of the Cartesian coordinate system will be denoted by $\mathcal{Q}_1$, $\mathcal{Q}_2$, $\mathcal{Q}_3$ and $\mathcal{Q}_4$, respectively. We will mark the $x$-axis and $y$-axis by $Ox$ and $Oy$, respectively. We use capital letters such as $A,B,C,\ldots$ to denote points in the plane whenever no other notation is specified. The notation $L_{a,b}$ is reserved for the Lozi map. In particular, $X$ and $Y$ are the fixed points of $L_{a,b}$, while $Z$ is the point on $W_X^u$ given by (\ref{eq:Z}); see Figure \ref{fig:Lozi_stable_unstable_Z_V}. For $A,B\in\mathbb{R}^2$, $\overline{AB}$ represents the straight line segment with endpoints $A$ and $B$. For $A\in\mathbb{R}^2$ and $\varepsilon>0$, $B_{\varepsilon}(A)$ is the open ball in the plane centered at $A$ of radius $\varepsilon$.
	
	Two-dimensional subsets of the plane are marked using capital script letters: $\mathcal{A},\mathcal{B}, \ldots$ Let $\dist$ denote the Euclidean metric on $\mathbb{R}^2$. For $\mathcal{A} \subset \mathbb{R}^2$, let $\Int\mathcal{A}$ be the interior of $\mathcal{A}$, $\Cl\mathcal{A}$ the closure of $\mathcal{A}$, $\partial\mathcal{A}$ the boundary of $\mathcal{A}$, and $\Conv\mathcal{A}$ the convex hull of $\mathcal{A}$.		
	
	In addition, we also employ specific notation concerning the Lozi map and the unstable manifold $W_X^u$. For $k\in\mathbb{Z}$ and a point $A \in \mathbb{R}^2$, let $A^k=L_{a,b}^{k}(A)$. In particular, $A^{0} = A$. For points $A,B\in W_X^u$, we define:
			\begin{itemize}
				\item $[A,B]^{u}\subset W_X^u$ is the polygonal line lying on $W_X^{u}$ with $A$ and $B$ as endpoints,
				\item specifically, if $[A,B]^{u}$ is a straight line segment, we denote it by $\overline{AB}^{u}$,
				\item $[A,B)^{u}:=[A,B]^{u}\setminus\{B\}$,
				\item $(A,B]^{u}:=[A,B]^{u}\setminus\{A\}$,
				\item $(A,B)^{u}:=[A,B]^{u}\setminus\{A,B\}$.
			\end{itemize}
Finally, $\ell$ will represent the set of accumulation points of $W_X^u$ not belonging to $W_X^u$ itself, that is, $\ell = \Cl W_X^u \setminus W_X^u$.

	\section{Non-wandering set and entropy outside $\ell$}\label{sec:non_wandering_zero_entropy}
	
	In this section, we want to give additional insight into the non-wandering set and topological entropy of the Lozi map while strengthening the results proven in \cite{yildiz2011monotonicity} and \cite{misiurewicz2024zero}. We prove that the non-wandering set $\Omega(L_{a,b}^2)$ is contained in the union of the set $\ell$ and the set of the fixed points $\{X,Y\}$ of $L_{a,b}$. As a consequence, the topological entropy of $L_{a,b}$ is zero outside the accumulation set $\ell$ of $W_X^u$. 
	
	We consider parameter pairs $(a,b)$, such that $0<b<1$, $1-b<a<1+b$ (recall that the period-two orbit $\{P,P'\}$ is attracting in that case), and such that there are no homoclinic points for the fixed point $X$, that is, $W_X^s \cap W_X^u = \{X\}$. We will denote that region in the parameter space by $\mathfrak{R}$.
	
	In \cite{misiurewicz2024zero}, the authors considered a region $\mathcal{R}$ contained in $\mathfrak{R}$, for which $W_X^u$ intersects the coordinate axes at $Z^{-1}$ and $Z$ only (for every $k\in\mathbb{N}_0$, point $Z^{2k}$ lies in the right, and $Z^{2k-1}$ in the left half-plane). The authors proved that $h_{top}(L_{a,b})=0$ for all $(a,b)\in \mathcal{R}$. Using the same proof technique, this result can be generalized to the case when $W_X^u$ intersects the coordinate axes at finitely many points. For completeness and to recall that technique, we will present a full proof of this claim.
	
	Recall that a topological space $\mathcal{X}$ is said to be \emph{simply connected} if every loop in $\mathcal{X}$ can be continuously contracted to a point.
	
	\begin{theorem}\label{thm:zero_entropy_axes_finitely_many}
	Let $(a,b)\in\mathfrak{R}$ and assume that $W_X^u$ intersects the coordinate axes at finitely many points. Then $h_{top}(L_{a,b})=0$.
	\end{theorem}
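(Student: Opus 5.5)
Since $W_X^u$ meets the coordinate axes in only finitely many points, there is $N$ with all further turning points confined to a fixed pattern. We use this to show that far along $W_X^u$ the map $L_{a,b}^2$ acts affinely and contracts the two branches onto $\{P,P'\}$. From this we deduce that every non-wandering point of $L_{a,b}^2$ is $X$, $Y$, $P$ or $P'$, and conclude $h_{top}=0$ via (\ref{eq:entropy_non_wandering_restriction}).

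\textbf{Affine tails.} Since $W_X^u\cap(Ox\cup Oy)$ is finite, there is $N$ such that the arc $[Z^{2N},Z^{2N+2}]^u$ and all its forward $L_{a,b}^2$-images avoid the axes. Each even iterate then lies in a fixed quadrant: the one containing $P$ (the fourth) for one branch, the second for the other, alternating under $L_{a,b}$. The case where an image stays in the first quadrant is excluded because $L_{a,b}$ is orientation-reversing with $\lambda^u_X<-1$, so points in $\mathcal{Q}_1$ get sent across. On these quadrants $L_{a,b}^2$ is a single affine map $A$ whose fixed point is $P$ (resp.\ $P'$). Because $\{P,P'\}$ is attracting and $1-b<a<1+b$, the linear part of $A$ has spectral radius $<1$. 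Hence the tails $L^{2n}_{a,b}([Z^{2N},Z^{2N+2}]^u)$ converge to $P$ and $P'$, so $\Cl W_X^u= W_X^u\cup\{P,P'\}$.

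\textbf{Trapping region.} We build a closed topological disc $\mathcal{D}$ bounded by an arc of $W_X^u$ from $X$ out to a far iterate, closed off by short segments near $P,P'$ and a piece of $W_X^{s}$ or a segment through $Z$. The aim is that $L_{a,b}^2(\mathcal{D})\subseteq\mathcal{D}$, and that every orbit outside $\mathcal{D}\cup W_X^s\cup W_Y^s$ either escapes to infinity or enters $\mathcal{D}$. The absence of homoclinic points ($W_X^s\cap W_X^u=\{X\}$) guarantees that this boundary is a simple closed curve. Lemma \ref{lem:stable_iterates_1st_quadrant} and Corollary \ref{cor:stable_iterates_1st_quadrant} handle points near the first-quadrant triangle $OZZ^{-1}$: such points either lie on $W_X^s$ or are pushed into $\mathcal{Q}_2$, and from there into $\mathcal{D}$.

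\textbf{Non-wandering set.} Inside $\mathcal{D}$ we remove small disc neighbourhoods of $P$ and $P'$ (which are basins for $L_{a,b}^2$) and the point $X$. On the remaining simply connected region, minus these points, $L_{a,b}^2$ is orientation-preserving and has no fixed points besides $X,Y,P,P'$. Corollary \ref{cor:BPTT_corollary}, applied to $L_{a,b}^2$ restricted to the complement of $\{X,Y,P,P'\}$ and extended to a plane homeomorphism, shows that every other point is wandering. This gives $\Omega(L_{a,b}^2)\subseteq\{X,Y,P,P'\}$, which is finite, so $h_{top}(L_{a,b}^2)=0$ and hence $h_{top}(L_{a,b})=0$.

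\textbf{Main obstacle.} The difficult step is justifying the application of BPTT. The complement of finitely many points is not simply connected, so one must either pass to a simply connected invariant domain or use Franks' Corollary 1.3 style reasoning with periodic disk chains. The approach I would try first is to show directly that a wandering/non-wandering dichotomy holds via the trapping: any recurrent point must stay in $\mathcal{D}$, and then use monotone ordering along the foliation by $L^2$-images of $\partial\mathcal{D}$ to rule out recurrence away from the four fixed points.
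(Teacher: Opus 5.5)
Your affine-tail step matches the paper's first step. The far tails of the two branches avoid the axes, $L_{a,b}^2$ acts on them as one affine contraction, and so $\Cl W_X^u=W_X^u\cup\{P,P'\}$. Your reason for ruling out a tail in $\mathcal{Q}_1$ is loose and you do not address $\mathcal{Q}_3$, but that is not where the argument breaks.

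The genuine gap is the non-wandering step, at exactly the point you call the ``main obstacle'', and neither route you propose gets through it. Corollary~\ref{cor:BPTT_corollary} needs a fixed-point-free, orientation-preserving homeomorphism of the plane, or of an open set homeomorphic to the plane that is mapped \emph{onto} itself.
\begin{itemize}
\item The set $\mathbb{R}^2\setminus\{X,Y,P,P'\}$ is not such a set. $L_{a,b}^2$ on it cannot be ``extended to a plane homeomorphism'' without putting the fixed points back. On a punctured plane the conclusion is in fact false: an irrational rotation of $\mathbb{R}^2\setminus\{0\}$ is fixed-point free and orientation preserving, yet every point is non-wandering.
\item Your set $\mathcal{D}$ with discs around $P,P'$ and the point $X$ removed fails too. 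It is not simply connected, and $L_{a,b}^2$ does not map it onto itself: a trapping region goes strictly into itself, and points outside a disc around $P$ enter that disc.
\item The trapping-region paragraph is asserted rather than proved. Even if it held, it would give no control over recurrence inside $\mathcal{D}$ away from the fixed points, which is precisely what must be ruled out.
\item The ``monotone ordering along images of $\partial\mathcal{D}$'' is a hope, not an argument.
\end{itemize}

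The missing idea is to remove, instead of four points, a connected, fully invariant set that contains every fixed point of $L_{a,b}^2$ together with $\infty$. Your first step already shows that $W_X^u\cup\{P,P'\}$ is compact, connected and $L_{a,b}^{\pm1}$-invariant: it is an arc from $P'$ through $X$ to $P$. Adjoin the invariant rays $W_X^{s+}$ (from $X$ to $\infty$ in $\mathcal{Q}_1$) and $W_Y^{u+}$ (from $Y$ to $\infty$ in $\mathcal{Q}_3$), and set $\mathcal{M}=W_X^u\cup\{P,P'\}\cup W_X^{s+}\cup W_Y^{u+}\cup\{\infty\}$.

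Since there are no homoclinic points, $W_X^{s+}\cap W_X^u=\{X\}$, so $\mathcal{M}$ is a tree that does not separate the plane. Hence $U=\mathbb{R}^2\setminus\mathcal{M}$ is open, simply connected, $L_{a,b}^{\pm1}$-invariant, homeomorphic to $\mathbb{R}^2$, and contains no fixed point of $L_{a,b}^2$.

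Corollary~\ref{cor:BPTT_corollary} now applies to $L_{a,b}^2\restrict{U}$. Every point of $U$ is wandering, also for $L_{a,b}^2$ on the plane since $U$ is open, so $\Omega(L_{a,b}^2)\subseteq\mathcal{M}$. The paper then concludes that $\Omega(L_{a,b}^2)$ consists of the fixed points only, and hence $h_{top}(L_{a,b})=0$. Alternatively, a homeomorphism of a finite tree has zero entropy, so $h_{top}(L_{a,b}^2)=h_{top}(L_{a,b}^2\restrict{\Omega(L_{a,b}^2)})\leqslant h_{top}(L_{a,b}^2\restrict{\mathcal{M}})=0$. No trapping region is needed in this case.
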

	
	\begin{proof}
	If $W_X^u$ intersects the coordinate axes at finitely many points, then there exists a point $A\in W_X^{u+}$ such that $[A,A^1]^u$ contains all intersections of $W_X^u$ with the coordinate axes. Let us consider the sets $ZR:=W_X^{u+}\setminus [X,A)^u$, $ZL:=W_X^{u-}\setminus [X,A^1)^u$. Notice that $ZR$ is contained in the fourth quadrant $\mathcal{Q}_4$, and $ZL$ is contained in $\mathcal{Q}_2$. In addition, $L_{a,b}(\Conv(ZR))=\Conv(ZL)$, $L_{a,b}(\Conv(ZL))\subset\Conv(ZR)$, and the restrictions $L^2_{a,b}\restrict{\Conv(ZR)}$, $L^2_{a,b}\restrict{\Conv(ZL)}$ are both affine maps. Therefore, the union of $\Conv(ZR)$ and $\Conv(ZL)$ is attracted to the periodic orbit $\{P,P'\}$ under iterations of $L_{a,b}^2$. Consequently, $W_X^u\cup\{P,P'\}$ is connected, compact, $L_{a,b}$-invariant and $L_{a,b}^{-1}$-invariant.
	
	Recall that $W_X^{s+}\subset W_X^s$ is a half-line starting at $X$ and going up in $\mathcal{Q}_1$, and denote by $W_Y^{u+}$ the connected component of the unstable manifold of the other fixed point $Y$ which is a half-line starting from $Y$ and going down in $\mathcal{Q}_3$. In the extended plane, let $\mathcal{M}$ be the union
	\begin{equation*}
	\mathcal{M}:=W_X^u\cup\{P,P'\}\cup W_X^{s+}\cup W_Y^{u+}\cup\{\infty\}.
	\end{equation*}
Moreover, let $U$ denote the complement of $\mathcal{M}$ in the plane, $U=\mathbb{R}^2\setminus\mathcal{M}$. The set $\mathcal{M}$ is both $L_{a,b}$-invariant and $L_{a,b}^{-1}$-invariant, compact, connected in the extended plane, and it does not separate either the extended plane or the plane. Hence, $U$ is open and simply connected in the plane, and therefore homeomorphic to the unit disc, and, by extension, to the whole plane.

	In addition, since $\mathcal{M}$ is both $L_{a,b}$-invariant and $L_{a,b}^{-1}$-invariant, then the same holds for its complement $U$. In particular, $U$ does not contain any fixed points of $L_{a,b}^2$.
	
	Finally, notice that the restriction $L_{a,b}^2\restrict{U}$ is an orientation preserving, fixed-point-free homeomorphism. Therefore, by Corollary \ref{cor:BPTT_corollary}, every point of $U$ is a wandering point for $L_{a,b}^2$, and the non-wandering set of $L_{a,b}^2$ consists of its fixed points only. It follows that $0 = h_{top}(L_{a,b}^2) = 2h_{top}(L_{a,b})$, which completes the proof.    
	\end{proof}
	
	Therefore, from now on, we can only observe parameter pairs $(a,b)\in\mathfrak{R}$ for which $W_X^u$ intersects the coordinate axes at infinitely many points.		  	

	\subsection{Accumulation set $\ell$} \label{subsec:accumulation_set_ell}
	
	In this subsection, we consider the set $\ell = \Cl W_X^u \setminus W_X^u$. We show that $\ell$ can be obtained as the intersection of the forward images of a suitably constructed polygon $\mathcal{D}$, which will serve as an eventually trapping region, that is, it is mapped to its interior after several iterations of $L_{a,b}$.
	
	Theorem \ref{thm:zero_entropy_axes_finitely_many} shows that the case of finitely many intersections of $W_X^u$ with the coordinate axes leads to zero entropy. Therefore, in the remainder of the section, we restrict our attention to the case where $W_X^u$ intersects the coordinate axes infinitely many times. In this setting, a new geometric construction is required. The following lemma establishes the existence of a point $S \in W_X^u \cap Ox$ such that the segment $\overline{SZ}$ does not contain further intersections with $W_X^u$. This point serves as the starting point for the construction of the eventually trapping region introduced below.
	
	\begin{lem}\label{lem:point_S}
	Let $(a,b) \in \mathfrak{R}$, and assume that $W_X^u$ intersects the coordinate axes at infinitely many points. Then there exists a point $S\in W_X^u\cap Ox$, $S\neq Z$, such that $\overline{SZ}\cap W_X^u=\{S,Z\}$.  
	\end{lem}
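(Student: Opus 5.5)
We need a point $S\in W_X^u\cap Ox$, $S\neq Z$, with no other point of $W_X^u$ on the open segment $\overline{SZ}$. The plan is to take $S$ as the point of $W_X^u\cap Ox$ closest to $Z$ on a suitable side, and to argue that such a closest point exists. The issue is that $W_X^u\cap Ox$ is infinite, so it could accumulate at $Z$ or along the axis.

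\textbf{Step 1: a neighbourhood of $Z$ free of other points of $W_X^u$.} Consider the compact arc $[Z^{-2},Z^{2}]^u$, which contains $Z$ in its relative interior. The segment $\overline{Z^{-1}Z}\subset W_X^u$ crosses $Ox$ transversally at $Z$. Since $W_X^u$ is injectively immersed, no other compact piece of $W_X^u$ passes through $Z$.

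The real issue is accumulation: points of $W_X^u$ far along the manifold might approach $Z$. Such points would have to lie in $\ell$ or close to it. Here I would use the mechanism behind the paper's later Corollary on compact subarcs having a neighbourhood free of the rest of $W_X^u$. I would prove the needed special case directly: a point $T\in W_X^u$ near $Z$ but far along the manifold satisfies $T^{-2n}\to X$. Backward iterates near $Z$ contract towards $\overline{XZ}^u$ along the unstable direction, since the map is affine in $\mathcal{Q}_1$ and in $\mathcal{Q}_4$ near $Z$. So such $T$ would force either a homoclinic intersection with $W_X^s$ or a self-intersection of $W_X^u$. The first is excluded because $(a,b)\in\mathfrak{R}$; the second is impossible for an unstable manifold.

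Concretely, I would show that $Z\notin \ell$. That gives $\varepsilon>0$ with $B_\varepsilon(Z)\cap W_X^u$ equal to the single segment through $Z$. The argument: $\ell$ is closed and $L_{a,b}$-invariant. If $Z\in\ell$, then by invariance $X=\lim Z^{-2n}$ would lie in the closure of $\ell$, hence in $\ell$. But the local unstable segment around $X$ is isolated from the rest of $W_X^u$: by the no-homoclinic hypothesis, points of $W_X^u$ accumulating at $X$ off $W^u_{loc}$ would accumulate onto $W_X^s$ near $X$, and the $\lambda$-lemma-type affine dynamics near $X$ then produces an intersection with $W_X^s$. This is a contradiction.

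\textbf{Step 2: choose $S$.} By Step 1, $W_X^u\cap Ox\setminus\{Z\}$ avoids $(Z-\varepsilon,Z+\varepsilon)\times\{0\}$. It is also nonempty, since by hypothesis there are infinitely many intersections. Points of $Ox$ with $x\le 0$ may exist, but all points of $W_X^u\cap Ox$ lie in a compact set, since $W_X^u\subset \Cl W_X^u$ is bounded (it accumulates on the bounded set $\ell$ and the attracting orbit region).

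Take the one-sided infimum of $\{x : (x,0)\in W_X^u,\ x>Z_x\}$, or, if that set is empty, the supremum of the set with $x<Z_x$. If this extremum is attained, the attaining point is $S$. If it is not attained, the extremum point lies in $\ell$ on $Ox$, and I would instead pick $S$ by a compactness and discreteness argument on each compact arc.

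\textbf{Main obstacle.} The main obstacle is precisely this non-attainment: $W_X^u\cap Ox$ might accumulate from one side onto a point of $\ell$ lying between.

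To fix it, I would exploit the segment structure. Each arc $L^{2n}_{a,b}(\overline{ZZ^2})$ meets $Ox$ in finitely many points. Starting from $Z$ and moving along $Ox$ to the right, the first point met that lies in $W_X^u$ exists if $W_X^u\cap Ox$ is closed near $Z$'s side. Failing that, I would move along the side where intersections are isolated, using the triangle $OZZ^{-1}$: segments of $W_X^u$ entering below $\overline{ZZ^{-1}}$ in $\mathcal{Q}_1$ would by Lemma~\ref{lem:stable_iterates_1st_quadrant} have to cross $W_X^s$, contradicting the no-homoclinic condition. This makes $\overline{OZ}$ essentially free of $W_X^u$, so the candidate $S$ is taken on the opposite side of $O$.
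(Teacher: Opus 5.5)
Your proposal has a genuine gap. The whole content of the lemma is that $W_X^u\cap Ox$ does not accumulate, neither at $Z$ nor at the would-be nearest point, and you never supply a mechanism that excludes this.

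\textbf{Step 1.} The argument for $X\notin\ell$ does not work. Nothing in the affine dynamics near $X$ forces an arc of $W_X^u$ that comes close to $X$ (off $\overline{Z^{-1}Z}$) to cross $W_X^s$. Such an arc may be folded and stay on one side of the local stable segment. A $\lambda$-lemma argument needs a crossing with $W_X^s$ as a hypothesis; it does not produce one. The conclusion $Z\notin\ell$ is true, but in the paper it is Corollary~\ref{cor:eps}, which is proved from the polygon $\mathcal{D}$, i.e.\ from this very lemma, so you cannot borrow it. Your appeal to boundedness of $W_X^u$ is circular in the same way, since it relies on boundedness of $\ell$.

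\textbf{Step 2.} You correctly identify non-attainment of the one-sided extremum as the main obstacle, but you do not resolve it, and the suggested fix rests on a false inference. Lemma~\ref{lem:stable_iterates_1st_quadrant} only says that points of the triangle $OZZ^{-1}$ off $W_X^s$ eventually leave into $\mathcal{Q}_2$. It does not keep $W_X^u$ out of that triangle or off $\overline{OZ}$. In the situation of Figure~\ref{fig:polygon_D_case1}, $W_X^{u+}$ crosses $\overline{OZ}$ repeatedly and $S$ lies between $O$ and $Z$. Moving to the other side of $O$ would just reproduce the same accumulation problem there.

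\textbf{Missing idea.} What is missing is a dynamical finiteness argument using a trapping polygon, which is what the paper does. Suppose $W_X^{u+}$ meets $Ox$ away from $Z$. Then some fundamental arc $[Z^{2i},Z^{2i+2}]^u$ meets $L_{a,b}^{-1}(Oy)=L_{a,b}^{-2}(Ox)$. By Corollary~\ref{cor:stable_iterates_1st_quadrant} and $W_X^s\cap W_X^u=\{X\}$, only finitely many consecutive arcs $[Z^{2i+2j},Z^{2i+2j+2}]^u$ can do so. Take the first arc that does not, and let $A$ be its rightmost point on $Ox$. The polygon bounded by $[Z,A]^u\cup\overline{AZ}$ then contains every later arc in its interior. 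A later arc cannot meet $[Z,A]^u$, because $W_X^u$ has no self-intersections. It cannot meet $\overline{AZ}$ either, because its $L_{a,b}^{-2}$-preimage would then meet $L_{a,b}^{-1}(Oy)$. Hence the tail of $W_X^{u+}$ never returns to $\overline{AZ}$, and the paper takes $S=A$.

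If instead $W_X^{u+}\cap Ox=\{Z\}$, let $B_1,B_2,B_3$ be the first three crossings of $W_X^{u-}$ with $Ox$. The same trapping argument with the polygon bounded by $[B_1,B_3]^u\cup\overline{B_1B_3}$ gives $S=B_2$.

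The isolation properties you tried to establish in Step 1 are consequences of this construction (Corollary~\ref{cor:eps}), not inputs to it.
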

	
	\begin{proof}
	Assume first that $W_X^{u+}$ intersects $Ox$ at points other than $Z$. Let $i\in\mathbb{N}$ be such that $[Z^{2i+2},Z^{2i+4}]^u$ intersects $Ox$. Then $[Z^{2i},Z^{2i+2}]^u$ intersects the preimage of $Oy$, that is, the curve $y=a|x|-1$ in the first or fourth quadrant ($\mathcal{Q}_1$ or $\mathcal{Q}_4$); see Figure \ref{fig:polygon_D_case1}.
	
	We claim that only finitely many consecutive forward images of $[Z^{2i},Z^{2i+2}]^u$ under $L_{a,b}^2$ intersect the preimage of $Oy$. Equivalently, there is $k\in\mathbb{N}$ such that 
	\begin{equation*}
	[Z^{2i+2k},Z^{2i+2k+2}]^u\cap L_{a,b}^{-1}(Oy)=\emptyset.
	\end{equation*} 
Suppose, for contradiction, that this is not the case. Then for every $j\in\mathbb{N}$,
	\begin{equation*}
	[Z^{2i+2j},Z^{2i+2j+2}]^u\cap L_{a,b}^{-1}(Oy)\neq\emptyset.
	\end{equation*}	 
In particular, $[Z^{2i},Z^{2i+2}]^u$ contains a point whose forward orbit under $L_{a,b}$ lies in $\mathcal{Q}_1$ below the straight line $\overline{ZZ^{1-}}^u$. Corollary \ref{cor:stable_iterates_1st_quadrant} now implies that this point lies on $W_X^s$. This is a contradiction with the assumption that the fixed point $X$ has no homoclinic points. 

	Therefore, there exists $k\in\mathbb{N}$ such that 
	\begin{equation*}
	[Z^{2i+2j},Z^{2i+2j+2}]^u\cap L_{a,b}^{-1}(Oy)\neq\emptyset \qquad \text{for all } j\in\{0,1,\ldots,k-1\},
	\end{equation*}
while
	\begin{equation*}
	[Z^{2i+2k},Z^{2i+2k+2}]^u\cap L_{a,b}^{-1}(Oy)=\emptyset.
	\end{equation*}			  
	
	Thus, $k$ is the smallest positive integer for which the segment $[Z^{2i+2k},Z^{2i+2k+2}]^u$ no longer intersects $L_{a,b}^{-1}(Oy)$. Let $A$ be the rightmost intersection point of $[Z^{2i+2k},Z^{2i+2k+2}]^u$ with $Ox$. We claim that $A$ is the desired point $S$. To prove this, define the polygon $\mathcal{D}$ with the boundary
	\begin{equation*}
	\partial\mathcal{D}=[Z,A]^u\cup \overline{AZ}.
	\end{equation*}	  
We will show that all forward images of $[Z^{2i+2k},Z^{2i+2k+2}]^u$ under $L_{a,b}^2$ remain in $\Int\mathcal{D}$. First, notice that $[Z^{2i+2k+2},Z^{2i+2k+4}]^u$ is contained in $\Int\mathcal{D}$ since it does not intersect $\partial\mathcal{D}$. Indeed, it cannot intersect $[Z,A]^u$ since $W_X^u$ does not intersect itself. Moreover, it does not intersect $\overline{AZ}$ either, since otherwise $[Z^{2i+2k},Z^{2i+2k+2}]^u$ would intersect $\overline{A^{-2}Z^{-2}}\subset L_{a,b}^{-1}(Oy)$, which is impossible by the choice of $k$. It follows that $[Z^{2i+2k+2},Z^{2i+2k+4}]^u\subset\Int\mathcal{D}$.

	Repeating this argument inductively, we see that for all $n\in\mathbb{N}$,
	\begin{equation*}
	L_{a,b}^{2n}([Z^{2i+2k},Z^{2i+2k+2}]^u)\subset\Int\mathcal{D},
	\end{equation*}
since the same non-intersection argument applies at each step. Indeed, none of these forward images can intersect $[Z,A]^u$, and they cannot intersect $\overline{AZ}$ since they do not intersect $\overline{A^{-2}Z^{-2}}$, which lies outside of $\mathcal{D}$. Consequently, $W_X^{u+}$ does not intersect the segment $\overline{AZ}$ at points other than $A$ and $Z$. This proves that $A$ is the desired point $S$ in this case.
	
	We now assume that $W_X^{u+}$ does not intersect $Ox$ at points other than $Z$. In that case, $W_X^{u+}$ intersects $Oy$, and $W_X^{u-}$ intersects $Ox$ (recall that we assume infinitely many intersections of $W_X^u$ with $Ox$ and $Oy$). Let $B_1,B_2,B_3$ be the first three consecutive intersections of $W_X^{u-}$ with $Ox$, counting from $X$; that is, $B_1,B_2,B_3\in W_X^{u-}\cap Ox$ are points such that $[X,B_1)^{u}\cap Ox=\emptyset$, $[X,B_2)^u \cap Ox =\{B_1\}$ and $[X,B_3)^u \cap Ox =\{B_1,B_2\}$. We define the polygon $\mathcal{E}$ with the boundary $\partial\mathcal{E}=[B_1,B_3]^u\cup\overline{B_1B_3}$. We claim that 
	\begin{equation*}
	W_X^{u-}\setminus[X,B_3]^u \subset \Int\mathcal{E}. 
	\end{equation*}		
	
	Indeed, $W_X^{u-}\setminus[X,B_3]^u$ does not intersect $[B_1,B_3]^u\subset W_X^{u}$. Moreover, it does not intersect $\overline{B_1B_3}$ either, since it does not intersect $\overline{B_1^{-2}B_3^{-2}}$ which lies outside $\mathcal{E}$. This proves our claim. Therefore, $W_X^{u-}$ does not intersect $Ox$ at any points to the right of $B_2$. It follows that $B_2$ is the desired point $S$ in this case. This finishes the proof.      	
	\end{proof}
	
	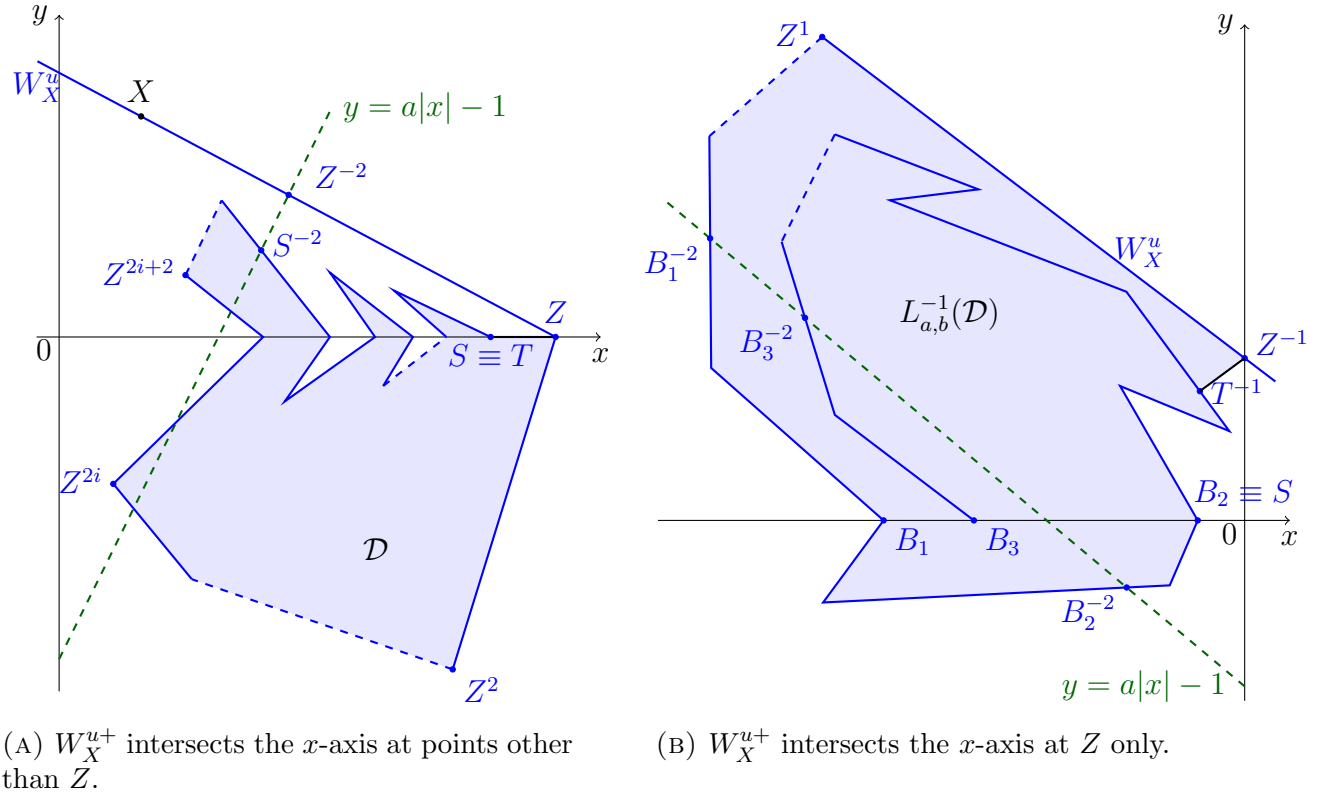
\begin{figure}[!ht]
	\begin{center}
	\begin{subfigure}[t]{0.475\textwidth}
	\begin{tikzpicture}[auto, scale=0.6, yscale=1.4285]
			\tikzstyle{nodec}=[draw,circle,fill=black,minimum size=2pt,
			inner sep=0pt, label distance=2mm]
			\tikzstyle{nodeh}=[draw,circle,fill=white,minimum size=4pt,
			inner sep=0pt]
			\tikzstyle{dot}=[circle,draw=none,fill=none,minimum size=0pt,inner sep=2pt, outer sep=-1pt]

			\draw[->] (-0.5,0)--(12,0) node [below]{$x$};
			\draw[->] (0,-5.5)--(0,5) node [left]{$y$};
			\node[label={[xshift=-0.2cm, yshift=-0.6cm]$0$}] at (0,0) {};
			
			\coordinate (e0) at (0,0);
			\coordinate (ex) at (1,0);
			\coordinate (ey) at (0,1);

			\coordinate (pr01) at (0,-5);
			\coordinate (pr02) at (6.04,3.56);
			
			\draw[green!40!black, thick, dashed] (pr01)--(pr02) node[right]{$y=a|x|-1$};
			
			\coordinate (a01) at (-0.48,4.28);
			\coordinate (a02) at (11,0);
			\coordinate (a03) at (8.72,-5.16);
			\coordinate (a04) at (2.94,-3.76);
			\coordinate (a05) at (1.2,-2.28);
			\coordinate (a06) at (4.52,0);
			\coordinate (a07) at (2.8,0.96);
			\coordinate (a08) at (3.6,2.12);
			\coordinate (a09) at (6,0);
			\coordinate (a10) at (5,-1);
			\coordinate (a11) at (7,0);
			\coordinate (a12) at (6,1);
			\coordinate (a13) at (7.84,0);
			\coordinate (a14) at (7.18,-0.76);
			\coordinate (a15) at (8.58,0);
			\coordinate (a16) at (7.42,0.72);
			\coordinate (a17) at (9.56,0);
			\coordinate (a18) at (8.18,-1.01);
			
			\draw[blue, thick] (a01) node[below]{$W_X^u$}--(a02)--(a03);
			\draw[blue, thick, dashed] (a03)--(a04);
			\draw[blue, thick] (a04)--(a05)--(a06)--(a07);
			\draw[blue, thick, dashed] (a07)--(a08);
			\draw[blue, thick] (a08)--(a09)--(a10)--(a11)--(a12)--(a13)--(a14);
			\draw[blue, thick, dashed] (a14)--(a15);
			\draw[blue, thick] (a15)--(a16)--(a17);
			
			\draw[thick] (a17)--(a02);
			
			\coordinate (x) at ($(a01)!0.2!(a02)$);
			
			\node[nodec, label={[above]$X$}] at (x) {};
			
			\node[nodec, blue, label={[above right, blue, xshift=2mm, yshift=-1mm]$Z^{-2}$}] at (intersection of a01--a02 and pr01--pr02) {};
			\node[nodec, blue, label={[above, blue]$Z$}] at (a02) {};
			\node[nodec, blue, label={[below right, blue]$Z^2$}] at (a03) {};
			\node[nodec, blue, label={[left, blue]$Z^{2i}$}] at (a05) {};
			\node[nodec, blue, label={[left, blue]$Z^{2i+2}$}] at (a07) {};
			\node[nodec, blue, label={[below, blue]$S\equiv T$}] at (a17) {};
			\node[nodec, blue, label={[right, blue, yshift=0.5mm]$S^{-2}$}] at (intersection of a08--a09 and pr01--pr02) {};
			
			\node[fill=blue!10!white] (dd) at (7.02,-3.3) {$\mathcal{D}$};
			
			\begin{pgfonlayer}{bg}
				\fill[blue!10!white] (a02.center)--(a03.center)--(a04.center)--(a05.center)--(a06.center)--(a07.center)--(a08.center)--(a09.center)--(a10.center)--(a11.center)--(a12.center)--(a13.center)--(a14.center)--(a15.center)--(a16.center)--(a17.center)--cycle;
			\end{pgfonlayer}
			\end{tikzpicture}
		\caption{$W_X^{u+}$ intersects the $x$-axis at points other than $Z$. \label{fig:polygon_D_case1}}
	\end{subfigure}
	\hfill
	\begin{subfigure}[t]{0.475\textwidth}
	\begin{tikzpicture}[auto, scale=0.6]
			\tikzstyle{nodec}=[draw,circle,fill=black,minimum size=2pt,
			inner sep=0pt, label distance=2mm]
			\tikzstyle{nodeh}=[draw,circle,fill=white,minimum size=4pt,
			inner sep=0pt]
			\tikzstyle{dot}=[circle,draw=none,fill=none,minimum size=0pt,inner sep=2pt, outer sep=-1pt]

			\draw[->] (-13,0)--(1,0) node [below]{$x$};
			\draw[->] (0,-4)--(0,11) node [left]{$y$};
			\node[label={[xshift=-0.2cm, yshift=-0.6cm]$0$}] at (0,0) {};
			
			\coordinate (e0) at (0,0);
			\coordinate (ex) at (1,0);
			\coordinate (ey) at (0,1);

			\coordinate (pr01) at (0,-3.68);
			\coordinate (pr02) at (-12.9,7.14);
			
			\draw[green!40!black, thick, dashed] (pr01)node[left, xshift=-1mm]{$y=a|x|-1$}--(pr02);
			
			\coordinate (a01) at (0.68,3.08);
			\coordinate (a02) at (-9.36,10.72);
			\coordinate (a03) at (-11.86,8.52);
			\coordinate (a04) at (-11.82,3.38);
			\coordinate (a05) at (-8,0);
			\coordinate (a06) at (-9.34,-1.82);
			\coordinate (a07) at (-1.66,-1.44);
			\coordinate (a08) at (-1.04,0);
			\coordinate (a09) at (-2.76,2.98);
			\coordinate (a10) at (-0.34,1.98);
			\coordinate (a11) at (-2.62,5.07);
			\coordinate (a12) at (-7.86,7.1);
			\coordinate (a13) at (-5.9,7.34);
			\coordinate (a14) at (-9.08,8.56);
			\coordinate (a15) at (-10.26,6.18);
			\coordinate (a16) at (-9.08,2.34);
			\coordinate (a17) at (-6,0);
			
			\draw[blue, thick] (a01)--(a02) node[pos=0.3, above]{$W_X^u$};
			\draw[blue, thick, dashed] (a02)--(a03);
			\draw[blue, thick] (a03)--(a04)--(a05)--(a06)--(a07)--(a08)--(a09)--(a10)--(a11)--(a12)--(a13)--(a14);
			\draw[blue, thick, dashed] (a14)--(a15);
			\draw[blue, thick] (a15)--(a16)--(a17);

			\coordinate (zm1) at (intersection of a01--a02 and e0--ey);
			
			\node[nodec, blue, label={[above right, blue, yshift=-1.5mm]$Z^{-1}$}] at (zm1) {};
			\node[nodec, blue, label={[left, blue]$Z^1$}] at (a02) {};
			\node[nodec, blue, label={[below right, blue]$B_1$}] at (a05) {};
			
			\node[nodec, blue, label={[above right, blue, xshift=-2mm]$B_2\equiv S$}] at (a08) {};
			
			\node[nodec, blue, label={[below right, blue]$B_3$}] at (a17) {};
			
			\node[nodec, blue, label={[below left, blue]$B_1^{-2}$}] at (intersection of a03--a04 and pr01--pr02) {};
			\node[nodec, blue, label={[below left, blue]$B_2^{-2}$}] at (intersection of a06--a07 and pr01--pr02) {};
			\node[nodec, blue, label={[below left, blue]$B_3^{-2}$}] at (intersection of a15--a16 and pr01--pr02) {};
			
			\coordinate (t) at ($(a10)!(zm1)!(a11)$);
			
			\node[nodec, blue, label={[right, blue, yshift=-0.5mm]$T^{-1}$}] at (t) {};
			
			\draw[thick] (zm1)--(t);
			
			\node[fill=blue!10!white] (ddm1) at (-6.54,4.56) {$L_{a,b}^{-1}(\mathcal{D})$};
			
			\begin{pgfonlayer}{bg}
				\fill[blue!10!white] (zm1.center)--(a02.center)--(a03.center)--(a04.center)--(a05.center)--(a06.center)--(a07.center)--(a08.center)--(a09.center)--(a10.center)--(t.center)--cycle;
			\end{pgfonlayer}
			\end{tikzpicture}
		\caption{$W_X^{u+}$ intersects the $x$-axis at $Z$ only. \label{fig:polygon_D_case2}}
	\end{subfigure}
	\end{center}
	\caption{Construction of the point $T$ and the polygon $\mathcal{D}$. \label{fig:polygon_D_construction}}
	\end{figure} 
	
	The proof of Lemma \ref{lem:point_S} shows the existence of a suitable polygon bounded by a segment of $W_X^u$ and a straight line segment. Since this polygon will be used repeatedly below, we now give a formal definition of it. We do so by introducing a point $T \in W_X^u$, constructed as follows.
	
	If $W_X^{u+}$ intersects $Ox$ at points other than $Z$, let $T := S$, where $S$ is the point as in Lemma \ref{lem:point_S}; see Figure \ref{fig:polygon_D_case1}.

	If $W_X^{u+}$ does not intersect $Ox$ at points other than $Z$, then the point $S$ lies on $W_X^{u-}$. Just as in the proof of Lemma \ref{lem:point_S}, let $B_1,B_2,B_3$ be the first three consecutive intersections of $W_X^{u-}$ with $Ox$, in the order along $W_X^{u-}$ starting from $X$; see Figure \ref{fig:polygon_D_case2}. From that same proof, we know that $S=B_2$. Moreover, $W_X^{u-}\setminus[X,B_3]^u$ is contained in a region bounded by $[B_1,B_3]^u\cup\overline{B_1B_3}$.
	
	Now, let $T^{-1}$ be the point on $[S,B_3]^u$ which is the closest (in the Euclidean metric) to $Z^{-1}$. If there are several such points (note that there can be at most finitely many of them), we choose $T^{-1}$ such that $[S,T^{-1}]^u$ contains all the others. In this case, we define $T$ as $T:=L_{a,b}(T^{-1})$. 
	
	In both cases, we define $\mathcal{D}$ to be the polygon with the boundary
	\begin{equation} \label{eq:polygon_D_boundary}
	\partial\mathcal{D}=[Z,T]^u\cup\overline{ZT}.
	\end{equation}
	
	Note that in both cases, $T$ lies on $W_X^{u+}$, satisfies $W_X^u\cap\overline{ZT}=\{Z,T\}$, and has the property that $W_X^{u+}\setminus[X,T]^u\subset\Int\mathcal{D}$. Geometrically, $\mathcal{D}$ is a region bounded by a portion of $W_X^u$ and the segment $\overline{ZT}$, which contains the forward branch $W_X^{u+}\setminus[X,T]^u$. 
	
	\begin{cor} \label{cor:polygon_D}
	Let $\mathcal{D}$ be the polygon whose boundary is given by (\ref{eq:polygon_D_boundary}). Then $\mathcal{D}$ is $L_{a,b}^2$-invariant, $L_{a,b}^2(\mathcal{D})\subset\mathcal{D}$.
	\end{cor}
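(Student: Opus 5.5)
Since $L_{a,b}$ is a homeomorphism of the plane and $\mathcal{D}$ is a closed topological disc bounded by the Jordan curve $\partial\mathcal{D}=[Z,T]^u\cup\overline{ZT}$, the image $L_{a,b}^2(\mathcal{D})$ is the closed disc bounded by the Jordan curve $L_{a,b}^2(\partial\mathcal{D})=[Z^2,T^2]^u\cup\overline{Z^2T^2}$. Here $\overline{Z^2T^2}$ is a straight segment: $\overline{ZT}\subset Ox$ lies in the right half-plane, so $L_{a,b}$ is affine there and maps it onto a segment of $Oy$. That segment lies in the upper half-plane because $Z,T$ have positive abscissae, so $L_{a,b}$ is again affine on it. My plan is to show $L_{a,b}^2(\partial\mathcal{D})\subset\mathcal{D}$. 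Then the Jordan curve $L_{a,b}^2(\partial\mathcal{D})$ lies in the disc $\mathcal{D}$, so its interior component is contained in $\mathcal{D}$, which gives $L_{a,b}^2(\mathcal{D})\subset\mathcal{D}$.

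\emph{First piece.} The arc $[Z^2,T^2]^u$ lies on $W_X^{u+}$, and since $T$ lies beyond $Z$ along $W_X^{u+}$, the arc $[Z^2,T^2]^u$ lies beyond $Z$. By construction (the remark after (\ref{eq:polygon_D_boundary})), $W_X^{u+}\setminus[X,T]^u\subset\Int\mathcal{D}$. So the portion of $[Z^2,T^2]^u$ past $T$ is in $\Int\mathcal{D}$, and the portion $[Z^2,T]^u\subset[Z,T]^u=\partial\mathcal{D}\cap W_X^u$ is in $\mathcal{D}$. Thus $[Z^2,T^2]^u\subset\mathcal{D}$.

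\emph{Second piece.} I need $\overline{Z^2T^2}\subset\mathcal{D}$. Its endpoints $Z^2,T^2$ already lie in $\mathcal{D}$. By the observation above, $\overline{Z^2T^2}$ meets $W_X^u$ only at $Z^2$ and $T^2$, since $\overline{ZT}\cap W_X^u=\{Z,T\}$ and $W_X^u$ is invariant. Hence the open segment $(Z^2,T^2)$ cannot cross $[Z,T]^u$. It remains to rule out a crossing of $\overline{ZT}$. Since $\overline{ZT}\subset Ox$, it suffices to show that $\overline{Z^2T^2}$ lies in the lower half-plane, or at least does not cross $Ox$ inside $\overline{ZT}$. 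Writing $\overline{Z^2T^2}=L_{a,b}(\overline{Z^1T^1})$ with $\overline{Z^1T^1}\subset Oy$, the second coordinate of an image point is $b$ times the first coordinate of $L_{a,b}$ applied to a point $(0,y)$, namely $b(1+y)\cdot$(sign factor), which shows that $\overline{Z^2T^2}$ lies on one side of $Ox$ exactly when $1+y$ has constant sign. Concretely, if $Z^1=(0,y_Z)$ and $T^1=(0,y_T)$, the image segment lies on the line $x'=1+\text{(something)}$, and its $y$-coordinates are $b\cdot(\text{first coordinate of }Z^1,T^1)=0$. I would therefore instead compute directly that $\overline{Z^2T^2}$ is parallel to $Ox$ shifted, and use that $Z^2$ lies in $\mathcal{Q}_4$ (Figure~\ref{fig:Lozi_stable_unstable_Z_V}). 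Alternatively, if the coordinate computation is awkward, I would use connectedness: $(Z^2,T^2)$ is connected and disjoint from $[Z,T]^u$. If it met $\overline{ZT}$, its preimage $(Z,T)$ under $L_{a,b}^{2}$ would meet $L_{a,b}^{-2}(\overline{ZT})$, a segment on $L_{a,b}^{-1}(Oy)$ lying outside $\mathcal{D}$, as in the proof of Lemma~\ref{lem:point_S}. This is impossible because $(Z,T)\subset\mathcal{D}$.

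\emph{Expected obstacle.} The hardest step will be this last one: checking, in both constructions of $T$ (in particular the second, where $T=L_{a,b}(T^{-1})$ was chosen by the closest-point rule), that the segment $\overline{ZT}$ and its image under $L_{a,b}^2$ do not cross. I would handle it by using the closest-point choice of $T^{-1}$ to show that the open segment $(Z^{-1},T^{-1})$ misses $W_X^u$ and lies in $\mathcal{Q}_2$ on the correct side, and then push this forward by $L_{a,b}$, which is affine there.
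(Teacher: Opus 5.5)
Your fallback connectedness argument is the paper's proof. It reduces to $L_{a,b}^2(\partial\mathcal{D})\subset\mathcal{D}$, handles $[Z^2,T^2]^u$ via $W_X^{u+}\setminus[X,T]^u\subset\Int\mathcal{D}$, and keeps $L_{a,b}^2(\overline{ZT})$ off $\partial\mathcal{D}$ because it meets $W_X^u$ only at $Z^2,T^2$ and $\overline{ZT}$ misses $L_{a,b}^{-2}(\overline{ZT})$, which lies outside $\mathcal{D}$. Like the paper, you take that last fact as given, and you rightly flag it as the point needing care in the second construction of $T$.

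Discard the coordinate discussion, however, because it contains three errors:
\begin{enumerate}[(i)]
\item It is $L_{a,b}^{-1}$, not $L_{a,b}$, that maps $Ox$ onto $Oy$. We have $L_{a,b}^{-1}(x,0)=(0,x-1)$, so $Z^{-1}\in Oy$, whereas $L_{a,b}(z,0)=(1-az,bz)$, so $Z^1\notin Oy$. Your own computation, which would put $Z^2,T^2$ on $Ox$, reflects this confusion.
\item $L_{a,b}$ is affine on the left and right half-planes $\{x\geqslant 0\}$ and $\{x\leqslant 0\}$, not on the upper and lower ones.
\item In the second construction, $T=L_{a,b}(T^{-1})$ with $T^{-1}$ the point of $[S,B_3]^u$ closest to $Z^{-1}$. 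There $T$ is in general not on $Ox$, so $\overline{ZT}\not\subset Ox$. Hence neither the reduction ``it suffices that $\overline{Z^2T^2}$ lies in the lower half-plane'' nor the description of $L_{a,b}^{-2}(\overline{ZT})$ as lying on $L_{a,b}^{-1}(Oy)$ holds in that case.
\end{enumerate}
None of this, including straightness of $L_{a,b}^2(\overline{ZT})$, is needed for the connectedness argument.

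One small fix to the final step: since $Z^2\in[Z,T]^u\subset\partial\mathcal{D}$, knowing that both endpoints lie in $\mathcal{D}$ is not enough. Use $T^2\in\Int\mathcal{D}$, which you have from the first piece. Then the connected open arc $L_{a,b}^2(\overline{ZT})\setminus\{Z^2,T^2\}$, being disjoint from $\partial\mathcal{D}$, must lie entirely in $\Int\mathcal{D}$.
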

	
	\begin{proof}
	By construction of $\mathcal{D}$ and the proof of Lemma \ref{lem:point_S}, it follows that $L_{a,b}^2(\partial\mathcal{D}\cap W_X^u)\subset\mathcal{D}$.
	
	It remains to consider the image of the segment $\overline{ZT}$ under $L_{a,b}^2$. We know that $L_{a,b}^2(\overline{ZT})$ intersects $W_X^u$ at its endpoints only, namely at $Z^2$ and $T^2$. Moreover, $L_{a,b}^2(\overline{ZT})$ does not intersect $\overline{ZT}$, since $\overline{ZT}$ does not intersect its preimage $L_{a,b}^{-2}(\overline{ZT})=\overline{Z^{-2}T^{-2}}$, which lies outside $\mathcal{D}$. It follows that $L_{a,b}^2(\overline{ZT})\subset\mathcal{D}$. 
		
	Therefore, $L_{a,b}^2(\partial\mathcal{D})\subset\mathcal{D}$, which implies the claim and completes the proof. 
	\end{proof}
	
	In addition, the previous proof implies that the boundary $\partial\mathcal{D}$ maps to the interior $\Int\mathcal{D}$ under finitely many iterates of $L_{a,b}^{2}$. More precisely, there exists $k \in \mathbb{N}$ such that $L_{a,b}^{2k}(\partial\mathcal{D}) \subset \Int\mathcal{D}$. Therefore, for the same $k$, we also have $L_{a,b}^{2k}(\mathcal{D}) \subset \Int\mathcal{D}$. This motivates the following notion, which formalizes this type of dynamical behavior.
	
	\begin{df} \label{dfn:eventually_trapping_region}
	Let $f \colon \mathcal{X} \rightarrow \mathcal{X}$ be a continuous map on a topological space $\mathcal{X}$. A set $\mathcal{A} \subset \mathcal{X}$ is called an \emph{eventually trapping region} for $f$ if there exists $n \in \mathbb{N}$ such that
	\begin{equation*}
	f^n(\Cl\mathcal{A}) \subset \Int\mathcal{A}.
	\end{equation*}
	\end{df}
	
	The following result is now an immediate consequence of Definition \ref{dfn:eventually_trapping_region} and the paragraph preceding it.
	 
	 \begin{cor} \label{cor:D_eventually_trapping}
	 Polygon $\mathcal{D}$ enclosed by the boundary given in (\ref{eq:polygon_D_boundary}) is an eventually trapping region for $L_{a,b}^2$.
	 \end{cor}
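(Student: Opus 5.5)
The claim is that there exists $k\in\mathbb{N}$ with $L_{a,b}^{2k}(\Cl\mathcal{D})\subset\Int\mathcal{D}$. Since $\mathcal{D}$ is a compact polygon, $\Cl\mathcal{D}=\mathcal{D}$. Because $L_{a,b}^{2k}$ is a homeomorphism of the plane and $\mathcal{D}$ is a closed topological disc bounded by the Jordan curve $\partial\mathcal{D}$, the image $L_{a,b}^{2k}(\mathcal{D})$ is the closed disc bounded by $L_{a,b}^{2k}(\partial\mathcal{D})$. So it suffices to find $k$ with $L_{a,b}^{2k}(\partial\mathcal{D})\subset\Int\mathcal{D}$. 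Indeed, then the image Jordan curve lies in the open disc $\Int\mathcal{D}$, which is simply connected, and so the bounded complementary component of the image curve also lies in $\Int\mathcal{D}$.

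I will treat the two parts of $\partial\mathcal{D}=[Z,T]^u\cup\overline{ZT}$ separately, using the structure from the proof of Lemma \ref{lem:point_S} and Corollary \ref{cor:polygon_D}.

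\textbf{The segment $\overline{ZT}$.} Corollary \ref{cor:polygon_D} shows that $L_{a,b}^{2}(\overline{ZT})\subset\mathcal{D}$, that it meets $W_X^u$ only at $Z^2$ and $T^2$, and that it misses $\overline{ZT}$. So $L_{a,b}^{2}(\overline{ZT})$ meets $\partial\mathcal{D}$ at most at $Z^2$ and $T^2$. Hence it suffices to ensure that for large $k$ the points $Z^{2k},T^{2k}$, and more generally the arc $[Z^{2k},T^{2k}]^u$, avoid $\partial\mathcal{D}$. Then $L_{a,b}^{2k}(\overline{ZT})$, whose interior points already avoid $\partial\mathcal{D}$ for every $k\ge1$ by the same preimage argument iterated, lies in $\Int\mathcal{D}$.

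\textbf{The arc $[Z,T]^u$.} Its image $L_{a,b}^{2k}([Z,T]^u)=[Z^{2k},T^{2k}]^u$ is a compact subarc of $W_X^{u+}$. Since $L_{a,b}^2$ moves points of $W_X^{u+}\setminus\{X\}$ away from $X$ along the branch, for $k$ large the whole arc $[Z^{2k},T^{2k}]^u$ lies in $W_X^{u+}\setminus[X,T]^u$. The construction of $T$ (in both cases of Lemma \ref{lem:point_S}) gives $W_X^{u+}\setminus[X,T]^u\subset\Int\mathcal{D}$. Concretely, I need $Z^{2k}\notin[X,T]^u$. Because $Z^{2k}$ runs monotonically outward along $W_X^{u+}$ and the arc $[X,T]^u$ is compact, such a $k$ exists. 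One can identify $T=Z^{2m}$ up to a fundamental domain, so $k=m+1$ works. Then $[Z^{2k},T^{2k}]^u$ is disjoint from $[X,T]^u$ and lies in $\Int\mathcal{D}$. In particular $Z^{2k},T^{2k}\in\Int\mathcal{D}$, which also handles the segment.

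\textbf{Main obstacle.} The delicate point is the step for $\overline{ZT}$: showing that $L_{a,b}^{2k}(\overline{ZT})$ does not touch $\overline{ZT}$ for every $k$, not just $k=1$. I would argue by induction. If $L_{a,b}^{2j}(\overline{ZT})$ met $\overline{ZT}$, then applying $L_{a,b}^{-2}$ shows that $L_{a,b}^{2j-2}(\overline{ZT})$ meets $\overline{Z^{-2}T^{-2}}$, which lies outside $\mathcal{D}$. This contradicts $L_{a,b}^{2j-2}(\overline{ZT})\subset\mathcal{D}$, which holds by Corollary \ref{cor:polygon_D}. The image also cannot meet $[Z,T]^u$ except possibly at its endpoints, because $W_X^u$ is $L_{a,b}$-invariant and $L_{a,b}^{2j}(\overline{ZT})$ meets $W_X^u$ only at $Z^{2j},T^{2j}$.

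Combining the two parts, $L_{a,b}^{2k}(\partial\mathcal{D})\subset\Int\mathcal{D}$, and by the Jordan curve argument above, $L_{a,b}^{2k}(\mathcal{D})\subset\Int\mathcal{D}$. This is exactly Definition \ref{dfn:eventually_trapping_region} with $n=k$ for $f=L_{a,b}^2$.
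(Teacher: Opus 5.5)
Your proposal is correct and follows the paper's route. The paper only remarks that the proof of Corollary~\ref{cor:polygon_D} gives some $k$ with $L_{a,b}^{2k}(\partial\mathcal{D})\subset\Int\mathcal{D}$, hence $L_{a,b}^{2k}(\mathcal{D})\subset\Int\mathcal{D}$; you make both steps explicit (choosing $k$ so that $Z^{2k}$ lies beyond $T$ on $W_X^{u+}$, and the Jordan-curve argument), though note that $L_{a,b}^{-2}(\overline{ZT})$ is not literally disjoint from $\mathcal{D}$, since its endpoint $T^{-2}$ lies on $[Z,T]^u$ (and if $T=Z^{2m}$ then $L_{a,b}^{2m}(\overline{ZT})$ touches $\overline{ZT}$ at $T$), so state the preimage argument only for interior points of $L_{a,b}^{2j}(\overline{ZT})$, whose $L_{a,b}^{-2}$-images lie off $W_X^u$ and so cannot equal $T^{-2}$, which is all you actually use.
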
 		 
		
	Now, we will show that our geometric construction of $\mathcal{D}$ implies that every compact subarc of $W_X^u$ is isolated from the rest of $W_X^u$.
	
	Let $M, N \in W^u_X$ be two distinct points and let $0 < \varepsilon < \frac{1}{2}\dist(M, N)$ (where $\dist$ is the Euclidean metric in $\mathbb{R}^2$). Recall that $(M, N)^{u} = [M, N]^{u} \setminus \{ M, N \}$. We say that a point $M' \in (M, N)^{u}$ is \emph{the first point along $(M,N)^u$ at distance $\varepsilon$ from $M$} if $\dist(M, M') = \varepsilon$ and for every $Q \in (M, N)^{u}$ such that $\dist(M, Q) = \varepsilon$, we have $(M, M')^{u} \subseteq (M, Q)^{u}$. 
	
	Let $N' \in (M, N)^{u}$ be the first point along $(M,N)^u$ at distance $\varepsilon$ from $N$. We define
	
	\begin{equation*}
	B_{\varepsilon}((M, N)^{u}) := \bigcup_{Q \in [M', N']^{u}}B_{\varepsilon}(Q).	
	\end{equation*} 
	
	\begin{cor}\label{cor:eps}
		For every $Q \in W^u_X$, $Q\neq X$, there exists $\varepsilon > 0$ such that $B_\varepsilon((X, Q)^{u}) \cap W^u_X = (X, Q)^{u}$.
	\end{cor}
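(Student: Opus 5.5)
The claim is that $(X,Q)^u$ has a uniform neighborhood meeting $W_X^u$ only in $(X,Q)^u$ itself. I would argue by contradiction. Suppose that for every $n$ there is a point $R_n\in W_X^u\setminus(X,Q)^u$ with $\dist(R_n,(X,Q)^u)$ small enough that $R_n\in B_{1/n}((X,Q)^u)$. Passing to a subsequence, $R_n\to R$ for some $R$ in the compact arc $[X',Q']^u\subset (X,Q)^u$. Here $X'$ and $Q'$ are the inner points defining the neighborhood, which stay a positive distance from $X$ and $Q$ once $\varepsilon$ is small. So $R$ is a point of $W_X^u$, away from $X$, that is accumulated by points of $W_X^u$ lying \emph{outside} the arc $(X,Q)^u$. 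The goal is to show that no point of $W_X^u$ other than possibly $X$ can be accumulated by $W_X^u$ from "far along" the manifold.

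\textbf{Step 1: control the far tail by $\mathcal{D}$.} The points $R_n$ lie either on a compact arc $[Q^{-2N},Q^{2N}]^u$-type piece for a fixed large $N$, or beyond it. On any fixed compact arc of $W_X^u$ (a finite polygonal line that is injectively embedded, since $W_X^u$ does not self-intersect), the points of $W_X^u\setminus(X,Q)^u$ stay at a positive distance from $[X',Q']^u$. This is because a compact arc disjoint from another compact set has positive distance from it, and the parts adjacent to $X$ and $Q$ are excluded by choosing $X',Q'$. So we may assume $R_n$ leaves every compact subarc. Then $R_n$ eventually lies in $W_X^{u+}\setminus[X,T]^u$ or in $W_X^{u-}\setminus[X,T^1]^u$. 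By the construction of $\mathcal{D}$ and Corollary~\ref{cor:D_eventually_trapping}, the first set lies in $L_{a,b}^{2m}(\mathcal{D})$ for arbitrarily large $m$, and the second lies in $L_{a,b}^{2m+1}(\mathcal{D})$. Hence $R\in\bigcap_m L_{a,b}^{2m}(\mathcal{D})$ or $R\in\bigcap_m L_{a,b}^{2m+1}(\mathcal{D})$.

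\textbf{Step 2: show these intersections avoid $W_X^u$.} Take $R\in W_X^u$, say $R=L_{a,b}^{j}(R_0)$ with $R_0$ near $X$ on the local unstable segment. I need $R\notin L_{a,b}^{2m}(\mathcal{D})$ for large $m$, which is equivalent to $L_{a,b}^{-2m}(R)\notin\mathcal{D}$. The backward iterates $L_{a,b}^{-2m}(R)$ converge to $X$ along $W_X^u$. So it suffices to show that $X$ has a neighborhood $V$ such that the points of $W_X^u\cap V$ near $X$ lie outside $\mathcal{D}$, or at least outside $\Int\mathcal{D}$ and off the part of $\partial\mathcal{D}$ that is mapped inside. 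Here I would use $\partial\mathcal{D}=[Z,T]^u\cup\overline{ZT}$. The point $X$ is separated from $\mathcal{D}$: the arc $[X,Z)^u$ is the straight segment from $X$ to $Z$, and $\mathcal{D}$ touches it only at $Z$. Thus a small neighborhood of $X$ misses $\mathcal{D}$, and likewise misses $L_{a,b}(\mathcal{D})$ in the odd case, since $L_{a,b}(\mathcal{D})$ has boundary $[Z^1,T^1]^u\cup\overline{Z^1T^1}$, again away from $X$. Therefore $L_{a,b}^{-2m}(R)\notin\mathcal{D}$ for large $m$, contradicting Step 1.

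\textbf{Main obstacle.} The delicate step is the bookkeeping in Step 1. I have to verify that every point of $W_X^u$ sufficiently far along either branch is contained in $L_{a,b}^{2m}(\mathcal{D})$, respectively $L_{a,b}^{2m+1}(\mathcal{D})$, for an $m$ that tends to infinity with the distance along the manifold. This follows from $W_X^{u+}\setminus[X,T]^u\subset\Int\mathcal{D}$ by applying $L_{a,b}^{2m}$: the set $W_X^{u+}\setminus[X,T^{2m}]^u$ lies in $L_{a,b}^{2m}(\mathcal{D})$. For the $W_X^{u-}$ branch one applies $L_{a,b}$ once more. I would also check the easy fact that $X\notin\mathcal{D}$.
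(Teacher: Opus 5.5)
\textbf{The gap is at the endpoints of the arc.} You assert that the inner points $X'$, $Q'$ stay a positive distance from $X$ and $Q$ once $\varepsilon$ is small. By definition, however, $\dist(X,X')=\dist(Q,Q')=\varepsilon$, so these points converge to the endpoints, and the limit $R$ of your $R_n$ may be $X$ or $Q$.

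In the case where the $R_n$ stay in a fixed compact subarc $J\supset[X,Q]^u$, this is actually forced. (That $J$ should contain $X$ in its interior; $[Q^{-2N},Q^{2N}]^u$ lies on one half of $W_X^u$ only.) The set $J\setminus(X,Q)^u$ is compact and meets $[X,Q]^u$ exactly in $\{X,Q\}$. So its distance to $[X',Q']^u$ tends to $0$ with $\varepsilon$, and compactness gives no contradiction. What is needed here is local geometry at the endpoints, which your argument never uses.

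At $X$ this geometry is available. Near $X$ the other half of $W_X^u$ lies on the same straight line, on the opposite side of $X$, so its points close to $X$ stay outside $B_\varepsilon((X,Q)^u)$ for small $\varepsilon$. This is why the paper starts from the straight segment $[X,Z^{-1}]^u$, whose continuations past both endpoints are collinear with it, and only then transports by $L_{a,b}^k$.

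At $Q$ the same reasoning works if $W_X^u$ is straight at $Q$ or turns there by at most a right angle. It fails at a vertex with acute angle $\theta$: the point $P_s$ at distance $s$ past $Q$ satisfies $\dist(P_s,Q')^2=s^2-2s\varepsilon\cos\theta+\varepsilon^2<\varepsilon^2$ for $0<s<2\varepsilon\cos\theta$. Hence $P_s\in B_\varepsilon((X,Q)^u)\setminus(X,Q)^u$ for every $\varepsilon$. This happens, for instance, at $Q=Z$ whenever $(\lambda_X^u)^2>b^2+2b$, e.g.\ for $(a,b)=(1.6,0.61)$. Seen from $Z$, the segments $\overline{ZX}$ and $\overline{ZZ^2}^u$ have directions $\binom{\lambda_X^u}{b}$ and $\left(\begin{smallmatrix}a&1\\ b&0\end{smallmatrix}\right)\binom{\lambda_X^u}{b}$, whose scalar product is $\lambda_X^u\bigl(2b+b^2-(\lambda_X^u)^2\bigr)$. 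So for such $Q$ no argument can close the endpoint case with $B_\varepsilon$ as defined. The paper's transport step also passes over this point.

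The rest of your argument is sound, and it handles the far tail differently from the paper. The paper notes that $W_X^u\setminus(Z,Z^1)^u$ lies in the compact set $\mathcal{D}\cup L_{a,b}(\mathcal{D})$, which has positive distance from $[X,Z^{-1}]^u$, and then pushes that neighborhood forward by $L_{a,b}^k$. You instead show that no point of $W_X^u$ lies in $\bigcap_m L_{a,b}^{2m}(\mathcal{D})$ or $\bigcap_m L_{a,b}^{2m+1}(\mathcal{D})$, by iterating backwards towards $X$. The paper's route is shorter. Yours needs only the single fact $X\notin\mathcal{D}$, which you rightly flag, rather than disjointness of $\mathcal{D}\cup L_{a,b}(\mathcal{D})$ from a whole segment; the paper also uses that fact without comment.

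If you keep $X'$, $Q'$ fixed instead of tying them to $\varepsilon$, your compactness argument correctly proves the following: every compact $K\subset(X,Q)^u$ has a neighborhood $\mathcal{V}$ with $\mathcal{V}\cap W_X^u\subset(X,Q)^u$. This is essentially the form quoted in the introduction. The stated form additionally requires the endpoint analysis above, and it fails at vertices where $W_X^u$ turns by more than a right angle.
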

	\begin{proof}
	We know from Lemma \ref{lem:point_S} that $W_X^u\setminus(Z,Z^1)^u$ is contained in $\mathcal{D}\cup L_{a,b}(\mathcal{D})$, and the union of these two polygons does not intersect the segment $[X,Z^{-1}]^u$. Therefore, there exists $\delta>0$ such that $B_{\delta}((X,Z^{-1})^u)\cap W_X^u=(X,Z^{-1})^u$.
	
	Now, let $Q\in W_X^u\setminus\{X\}$ be arbitrary, but fixed. There exists $k\in\mathbb{N}_0$ such that $Q^{-k}\in(X,Z^{-1})^u$, that is, $(X,Q^{-k})^u\subset(X,Z^{-1})^u$. Therefore, $L_{a,b}^k(B_{\delta}((X,Z^{-1})^u))$ is an open neighborhood of $(X,Q)^u$ such that $(X,Q)^u\cap L_{a,b}^k(B_{\delta}((X,Z^{-1})^u))=(X,Q)^u$. Within that open neighborhood, we can find $\varepsilon>0$ such that $B_{\varepsilon}((X,Q)^u)$ is contained in it. This $\varepsilon$ satisfies the claim, which completes the proof.
	\end{proof}
	
	Recall that $\ell$ denotes the set of all accumulation points of $W_X^u$ which do not lie on $W_X^u$, that is, $\ell = \Cl W_X^u \setminus W_X^u$. Since Corollary \ref{cor:eps} implies that $W_X^u$ locally is an arc, we see that $\ell$ is also the $\omega$-limit set of $W_X^u$, that is, $\ell = \omega(W_X^u,L_{a,b})$.
	
	Furthermore, let $\mathcal{D}$ be the polygon enclosed by the boundary given in (\ref{eq:polygon_D_boundary}). Notice that the accumulation set $\ell$ of $W_X^u$ can be represented as a union
	\begin{equation*}
	\ell=\ell_L\cup\ell_R,
	\end{equation*}
	where $\ell_R=\ell\cap\mathcal{D}$ and $\ell_L=\ell\cap L_{a,b}(\mathcal{D})$, corresponding to the left and right parts of the accumulation set. Observe that both $\ell_R$ and $\ell_L$ are $L_{a,b}^2$-invariant and $L_{a,b}^{-2}$-invariant. In fact, $\ell_R = \omega(W_X^{u+}, L_{a,b}^2)$ and $\ell_L = \omega(W_X^{u-},L_{a,b}^2)$.
	
	\begin{figure}[!ht]
	\begin{center}
	\begin{tikzpicture}[auto, scale=1.25]
			\tikzstyle{nodec}=[draw,circle,fill=black,minimum size=2pt,
			inner sep=0pt, label distance=2mm]
			\tikzstyle{nodeh}=[draw,circle,fill=white,minimum size=4pt,
			inner sep=0pt]
			\tikzstyle{dot}=[circle,draw=none,fill=none,minimum size=0pt,inner sep=2pt, outer sep=-1pt]

			\draw[->] (-1.5,0)--(6,0) node [below]{$x$};
			\draw[->] (0,-4.5)--(0,2.5) node [left]{$y$};
			\node[label={[xshift=-0.2cm, yshift=-0.6cm]$0$}] at (0,0) {};
			
			\coordinate (e0) at (0,0);
			\coordinate (ex) at (1,0);
			\coordinate (ey) at (0,1);

			\coordinate (a01) at (-0.78,1.92);
			\coordinate (a02) at (5,0);
			\coordinate (a03) at (5.33,-2.96);
			\coordinate (a04) at (-0.97,-4.34);
			\coordinate (a05) at (0.6,-1.65);
			\coordinate (a06) at (-0.67,-1.32);
			\coordinate (a07) at (1.26,0.45);
			\coordinate (a08) at (2.36,0);
			\coordinate (a09) at (1.16,-0.7);
			\coordinate (a10) at (3,0);
			\coordinate (a11) at (2.21,0.57);
			\coordinate (a12) at (4.27,0);
			\coordinate (a13) at (3.09,-1.82);
			\coordinate (a14) at (4.04,-1.44);
			\coordinate (a15) at (3.9,-2.63);
			\coordinate (a16) at (1.55,-2.68);
			\coordinate (a17) at (2.19,-3.27);
			\coordinate (a18) at (0.7,-2.67);
			\coordinate (a19) at (0.84,-1.85);
			
			\node[nodec, color=blue, label={[above right, color=blue]$Z$}] at (a02) {};
			\node[nodec, color=blue, label={[right, color=blue]$Z^2$}] at (a03) {};
			\node[nodec, color=blue, label={[left, color=blue]$Z^4$}] at (a04) {};
			\node[nodec, color=blue, label={[left, color=blue]$Z^6$}] at (a06) {};
			\node[nodec, color=blue, label={[below left, xshift=-2mm, color=blue]$T$}] at (a12) {};
			\node[nodec, color=blue, label={[above left, color=blue]$T^2$}] at (a15) {};
			\node[nodec, color=blue, label={[above right, color=blue]$T^4$}] at (a18) {};
			
			\draw[thick] (a03)--(a15);
			\draw[thick] (a04)--(a18);
			
			\draw[blue, thick] (a02)--(a01) node[above]{$W_X^u$};
			\draw[blue, thick] (a02)--(a03)--(a04)--(a05)--(a06);
			\draw[blue, thick, dashed] (a06)--(a07);
			\draw[blue, thick] (a07)--(a08)--(a09)--(a10)--(a11)--(a12)--(a13)--(a14)--(a15)--(a16)--(a17)--(a18);
			\draw[blue, thick, dashed] (a18)--(a19);
			
			\node[fill=green!10!white] (kk) at (4.5,-1.26) {$\mathcal{K}$};
			\node[fill=red!10!white] (l2k) at (0.92,-3.42) {$L_{a,b}^2(\mathcal{K})$};
			
			\begin{pgfonlayer}{bg}
				\fill[green!10!white] (a02.center)--(a03.center)--(a15.center)--(a14.center)--(a13.center)--(a12.center)--cycle;
				
				\fill[red!10!white] (a03.center)--(a04.center)--(a18.center)--(a17.center)--(a16.center)--(a15.center)--cycle;
				
				\fill[pattern=north east lines] (a02.center)--(a03.center)--(a15.center)--(a14.center)--(a13.center)--(a12.center)--cycle;
				
				\fill[pattern=dots] (a03.center)--(a04.center)--(a18.center)--(a17.center)--(a16.center)--(a15.center)--cycle;
			\end{pgfonlayer}

			\end{tikzpicture}
	\end{center}
	\caption{Polygon $\mathcal{K}$ and its image under $L_{a,b}^2$ (Lemma \ref{lem:P_intersection_D'}). \label{fig:entropy_ell_K}}
	\end{figure}
	
	Recall that for a point $A \in \mathbb{R}^2$ and a subset $\mathcal{S} \subseteq \mathbb{R}^2$, the Euclidean distance from $A$ to $\mathcal{S}$ is $\dist(A,\mathcal{S})=\inf_{B \in \mathcal{S}}\dist(A,B)$.
	
	\begin{lem} \label{lem:P_intersection_D'}
	Let $\mathcal{D}$ be the polygon enclosed by the boundary given in (\ref{eq:polygon_D_boundary}). Then
	\begin{equation*}
	\ell_R=\bigcap_{k=0}^{\infty}L_{a,b}^{2k}(\mathcal{D}).
	\end{equation*}
	\end{lem}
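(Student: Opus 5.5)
We prove the two inclusions separately, writing $\mathcal{D}_k:=L_{a,b}^{2k}(\mathcal{D})$. By Corollary \ref{cor:polygon_D} the sets $\mathcal{D}_k$ form a nested decreasing sequence of compact sets. Hence $\mathcal{D}_\infty:=\bigcap_{k\geqslant 0}\mathcal{D}_k$ is compact, and it is invariant under both $L_{a,b}^2$ and $L_{a,b}^{-2}$ (for the second, use that $L_{a,b}$ is a homeomorphism and the sequence is nested).

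For the inclusion $\ell_R\subseteq\mathcal{D}_\infty$, recall that $\ell_R=\omega(W_X^{u+},L_{a,b}^2)$ and that $W_X^{u+}\setminus[X,T]^u\subset\Int\mathcal{D}$. Fix $k$. Every point of $W_X^{u+}$ sufficiently far along the branch, namely beyond $L_{a,b}^{2k}(T)$, lies in $L_{a,b}^{2k}(W_X^{u+}\setminus[X,T]^u)\subset\mathcal{D}_k$. A point $A\in\ell_R$ is a limit of points $A_j\in W_X^{u+}$. These points must eventually lie beyond any fixed compact subarc $[X,L_{a,b}^{2k}(T)]^u$, because $A\notin W_X^u$ and a compact subarc is closed. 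Hence $A_j\in\mathcal{D}_k$ for large $j$, and since $\mathcal{D}_k$ is closed, $A\in\mathcal{D}_k$. As $k$ was arbitrary, $A\in\mathcal{D}_\infty$.

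The reverse inclusion $\mathcal{D}_\infty\subseteq\ell_R$ is the main obstacle, since a priori the nested polygons could shrink onto a set containing two-dimensional pieces, or pieces not approached by $W_X^u$. Take $A\in\mathcal{D}_\infty$ and suppose $\dist(A,\Cl W_X^{u+})=\delta>0$. Note $A\notin W_X^u$: indeed $W_X^u\cap\mathcal{D}_\infty$ is empty, because each point of $W_X^{u+}$ lies on $\partial\mathcal{D}$ or outside $\mathcal{D}_k$ for large $k$, by the isolation in Corollary \ref{cor:eps}.

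The idea is that $\partial\mathcal{D}_k=L^{2k}_{a,b}([Z,T]^u)\cup L^{2k}_{a,b}(\overline{ZT})$. The curved part lies on $W_X^{u+}$, while the straight part $\overline{Z^{2k}T^{2k}}$ has length tending to $0$ for the following reason. The segment $\overline{ZT}$ lies on $Ox$, and its images are straight segments as long as they do not meet the axes. By the construction of $T$ (Lemma \ref{lem:point_S}), the images $\overline{Z^{2k}T^{2k}}$ never cross $L_{a,b}^{-1}(Oy)$, so $L_{a,b}^2$ acts on them as a composition of affine maps. I would first try to show that these segments contract, using that $\overline{ZT}$ is contained in a stable-type direction. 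A cleaner route would be to compare with $\{P,P'\}$ via the attracting affine pieces, where $|\det DL_{a,b}^2|=b^2<1$; this gives that areas of $\mathcal{D}_k$ tend to $0$, although area decay alone does not control length.

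Granting that the length of $\overline{Z^{2k}T^{2k}}$ tends to $0$, we conclude as follows. Since $A\in\Int\mathcal{D}_k$ or $A\in\partial\mathcal{D}_k$ for all $k$, consider a ray from $A$ leaving the bounded polygon $\mathcal{D}_k$. It must hit $\partial\mathcal{D}_k$ within distance $\operatorname{diam}\mathcal{D}$. More usefully, the area decay forces $\mathcal{D}_k$ to contain no ball $B_{\delta/2}(A)$ for large $k$. So the ball $B_{\delta/2}(A)$, which is connected and meets $\mathcal{D}_k$, meets $\partial\mathcal{D}_k$. By the choice of $\delta$, it must meet the short segment $\overline{Z^{2k}T^{2k}}$. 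The endpoints $Z^{2k},T^{2k}$ of this segment lie on $W_X^{u+}$, so $\dist(A,W_X^{u+})<\delta/2+o(1)$, a contradiction. Thus $A\in\Cl W_X^{u+}\setminus W_X^u\subseteq\ell$, and since $A\in\mathcal{D}$, we get $A\in\ell\cap\mathcal{D}=\ell_R$. If the length control on $\overline{Z^{2k}T^{2k}}$ fails, I would instead replace the hypothesis by the fact that these segments accumulate only on $\Cl W_X^{u+}$, as their endpoints do, and argue via the iterated polygons $L_{a,b}^{2k}(\mathcal{K})$ with $\mathcal{K}=\mathcal{D}\setminus\Int L_{a,b}^2(\mathcal{D})$ as in Figure \ref{fig:entropy_ell_K}.
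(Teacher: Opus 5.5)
\textbf{There is a genuine gap in your reverse inclusion.} Your first inclusion is correct. You use $\ell_R=\omega(W_X^{u+},L_{a,b}^2)$ and $W_X^{u+}\setminus[X,T]^u\subset\Int\mathcal{D}$ directly, whereas the paper uses $\ell_R\subseteq\mathcal{D}$ plus the two-sided invariance of $\ell_R$; both work. Your conditional argument for $\bigcap_k L_{a,b}^{2k}(\mathcal{D})\subseteq\ell_R$ is also correct \emph{if} the length of $\overline{Z^{2k}T^{2k}}$ tends to $0$. That hypothesis, however, is the whole difficulty, and nothing you or the paper establish supports it:
\begin{itemize}
\item The choice of $T$ in Lemma~\ref{lem:point_S} and Corollary~\ref{cor:polygon_D} controls only arcs of $W_X^u$ and the position of $L_{a,b}^2(\overline{ZT})$. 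Nothing says that $L_{a,b}^{2k}(\overline{ZT})$ avoids $L_{a,b}^{-1}(Oy)$ or even stays straight.
\item On affine pieces, $|\det DL_{a,b}^2|=b^2$ bounds areas, not lengths.
\item The ``stable-type direction'' heuristic has no basis. In the first case $\overline{ZT}\subset Ox$, and the horizontal direction is closer in angle to the unstable eigendirection $\binom{\lambda_X^u}{b}$ of $DL_{a,b}$ on $\{x>0\}$ than to the stable one.
\item Length decay can genuinely fail. Section~\ref{sec:concluding_remarks} indicates that $\ell$ may contain saddle periodic orbits with transverse homoclinic intersections. If $\overline{ZT}$ crosses the stable manifold of such a saddle transversally, its images accumulate on the saddle's unstable manifold and their lengths stay bounded below, even though the lemma could still hold.
\item Your fallback, that the segments accumulate only on $\Cl W_X^{u+}$ because their endpoints do, is a non sequitur. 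Convergence of endpoints says nothing about interior points of long arcs, and the statement is essentially the inclusion you are trying to prove.
\end{itemize}

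\textbf{How the paper handles this step.} The paper does not try to shrink the segments. After the same area-decay step, giving $A\in\omega(\partial\mathcal{D},L_{a,b}^2)$, it invokes (\ref{eq:polygon_D_iterates}). There, the segments $L_{a,b}^{2j}(\overline{ZT})$ with $j<k$ lie outside $L_{a,b}^{2k}(\mathcal{D})$, while the arcs $[Z^{2j},Z^{2j+2}]^u$ with $j\geqslant k$ lie inside. From this it asserts that for every $i$ some arc $[Z^{2j},Z^{2j+2}]^u$ with $j>i$ is closer to $A$ than $L_{a,b}^{2i}(\overline{ZT})$. That upgrades the conclusion to $A\in\omega([Z,T]^u,L_{a,b}^2)\subseteq\Cl W_X^u$.

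This comparison is where the real content lies, and the paper justifies it only briefly. A shortest segment from $A$ to $L_{a,b}^{2i}(\overline{ZT})$ must cross $\partial L_{a,b}^{2k}(\mathcal{D})$ for every $k>i$. But (\ref{eq:polygon_D_iterates}) alone does not prevent it from crossing only through the straight parts $L_{a,b}^{2k}(\overline{ZT})$. For instance, an attracting periodic orbit of $L_{a,b}^2$ inside $\mathcal{D}$ not approached by $W_X^u$ would lie in $\bigcap_k L_{a,b}^{2k}(\mathcal{D})$, and would be approached by the boundaries only through those segments. So the missing idea is an argument excluding parts of $\bigcap_k L_{a,b}^{2k}(\mathcal{D})$ that the images of $\overline{ZT}$ reach but $W_X^u$ does not. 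Length control of $\overline{Z^{2k}T^{2k}}$ is not the right tool for this.
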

	
	\begin{proof}
	Since $\ell_R\subseteq\mathcal{D}$, $\mathcal{D}$ is $L_{a,b}^2$-invariant (Corollary \ref{cor:polygon_D}) and $\ell_R$ is both $L_{a,b}^2$-invariant and $L_{a,b}^{-2}$-invariant, it directly follows that $\ell_R\subseteq\bigcap_{k=0}^{\infty}L_{a,b}^{2k}(\mathcal{D})$.
	
	It remains to show that $\bigcap_{k=0}^{\infty}L_{a,b}^{2k}(\mathcal{D})\subseteq\ell_R$. To prove that, let $\mathcal{K}$ be the polygon with the boundary
	\begin{equation} \label{eq:polygon_K_boundary}
	\partial\mathcal{K}=\overline{TZ}\cup[Z,Z^2]^{u}\cup[T,T^2]^{u}\cup L_{a,b}^2(\overline{TZ}).
	\end{equation}
	We see that $\Int L_{a,b}^{2j}(\mathcal{K})\cap\Int L_{a,b}^{2k}(\mathcal{K})=\emptyset$ for all $j,k\in\mathbb{N}_0$, $j\neq k$ (see Figure \ref{fig:entropy_ell_K}).
	
	Notice that
	\begin{equation*}
	L_{a,b}^{2}(\mathcal{D})=\mathcal{D} \setminus \bigl( \Int\mathcal{K} \cup \overline{ZT} \cup \overline{ZZ^2}^u \bigr),
	\end{equation*}
and similarly, for every $k\in\mathbb{N}$,
	\begin{equation} \label{eq:polygon_D_iterates}
	L_{a,b}^{2k}(\mathcal{D})=\mathcal{D}\setminus \left( \bigcup_{j=0}^{k-1}\Int L_{a,b}^{2j}(\mathcal{K}) \cup \bigcup_{j=0}^{k-1} L_{a,b}^{2j}(\overline{ZT}) \cup \bigcup_{j=0}^{k-1} L_{a,b}^{2j}(\overline{ZZ^2}^u) \right).
	\end{equation}
	
	Now, take a point $A \in \bigcap_{k=0}^{\infty}L_{a,b}^{2k}(\mathcal{D})$. Since $|\det DL_{a,b}|=b<1$, we see that $L_{a,b}$ is dissipative. In particular, the area of $L_{a,b}^{2k}(\mathcal{D})$ tends to zero as $k \rightarrow \infty$. The polygons $L_{a,b}^{2k}(\mathcal{D})$ form a nested sequence of compact sets containing $A$. Hence, for any $\varepsilon>0$, the open ball $B_{\varepsilon}(A)$ has a strictly larger area than $L_{a,b}^{2k}(\mathcal{D})$ for all sufficiently large $k$, and therefore cannot be contained in $L_{a,b}^{2k}(\mathcal{D})$. It follows that the Euclidean distance from $A$ to the boundary $\partial L_{a,b}^{2k}(\mathcal{D})=L_{a,b}^{2k}(\partial\mathcal{D})$ also tends to zero as $k\rightarrow\infty$.  Consequently, $A$ belongs to the $\omega$-limit set of the boundary $\partial\mathcal{D}$, that is, 
	\begin{equation} \label{eq:A_omega_limit_boundary_D}
	A \in \omega(\partial\mathcal{D}, L_{a,b}^2)=\omega([Z,T]^u\cup\overline{ZT}, L_{a,b}^2).
	\end{equation}
	
	On the other hand, for every $k \in \mathbb{N}$, from (\ref{eq:polygon_D_iterates}) it follows that $\mathcal{D} \setminus L_{a,b}^{2k}(\mathcal{D})$ contains $L_{a,b}^{2j}(\overline{ZT})$ for all $j = 0,1,\ldots,k-1$. At the same time, we know that $L_{a,b}^{2k}(\mathcal{D})$ contains $[Z^{2j},Z^{2j+2}]^u$ for all $j\geqslant k$.
	
	As a consequence, for every $i \in \mathbb{N}_0$, there exists $j \in \mathbb{N}_0$, $j > i$, such that the Euclidean distance from $A$ to $[Z^{2j},Z^{2j+2}]^u$ is less than the one from $A$ to $L_{a,b}^{2i}(\overline{ZT})$. Combining this with (\ref{eq:A_omega_limit_boundary_D}), we obtain $A \in \omega([Z,T]^u,L_{a,b}^{2})$, from which it follows that $A \in \ell$. This completes the proof.   
	\end{proof}
	
	\begin{cor} \label{cor:ell_intersect_D}
	Let $\mathcal{D}$ be as in Lemma \ref{lem:P_intersection_D'}. Then
	\begin{equation*}
	\ell_L=\bigcap_{k=0}^{\infty} L_{a,b}^{2k+1}(\mathcal{D}),\qquad \ell=\bigcap_{k=0}^{\infty} L_{a,b}^{k}\bigl(\mathcal{D} \cup L_{a,b}(\mathcal{D})\bigr).
	\end{equation*}
	\end{cor}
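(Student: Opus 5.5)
The first identity follows from Lemma \ref{lem:P_intersection_D'} by applying the homeomorphism $L_{a,b}$. Since $L_{a,b}$ is a bijection, it commutes with intersections:
\begin{equation*}
L_{a,b}\Bigl(\bigcap_{k=0}^{\infty}L_{a,b}^{2k}(\mathcal{D})\Bigr)=\bigcap_{k=0}^{\infty}L_{a,b}^{2k+1}(\mathcal{D}),
\end{equation*}
so it suffices to show that $L_{a,b}(\ell_R)=\ell_L$. The set $\ell=\Cl W_X^u\setminus W_X^u$ is invariant under $L_{a,b}$ and $L_{a,b}^{-1}$, because both $W_X^u$ and its closure are, and $L_{a,b}$ is a homeomorphism. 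Hence
\begin{equation*}
L_{a,b}(\ell\cap\mathcal{D})=\ell\cap L_{a,b}(\mathcal{D})=\ell_L.
\end{equation*}
Alternatively, one can use $\ell_R=\omega(W_X^{u+},L_{a,b}^2)$ together with $L_{a,b}(W_X^{u+})=W_X^{u-}$, which gives $L_{a,b}(\ell_R)=\omega(W_X^{u-},L_{a,b}^2)=\ell_L$.

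For the second identity, write $\mathcal{G}=\mathcal{D}\cup L_{a,b}(\mathcal{D})$. We first note that $L_{a,b}(\mathcal{G})=L_{a,b}(\mathcal{D})\cup L_{a,b}^2(\mathcal{D})\subseteq\mathcal{G}$ by Corollary \ref{cor:polygon_D}. So the sets $L_{a,b}^k(\mathcal{G})$ are nested, and
\begin{equation*}
L_{a,b}^{k}(\mathcal{G})=L_{a,b}^k(\mathcal{D})\cup L_{a,b}^{k+1}(\mathcal{D}).
\end{equation*}

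\emph{Inclusion $\supseteq$.} We have $\ell=\ell_R\cup\ell_L\subseteq\mathcal{G}$, and $\ell$ is invariant under $L_{a,b}^{-k}$, so $\ell\subseteq L_{a,b}^k(\mathcal{G})$ for every $k$.

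\emph{Inclusion $\subseteq$.} Take $A$ in the intersection. For each $n$, the point $A$ lies in $L_{a,b}^{2n}(\mathcal{G})=L_{a,b}^{2n}(\mathcal{D})\cup L_{a,b}^{2n+1}(\mathcal{D})$. The sequences $(L_{a,b}^{2n}(\mathcal{D}))_n$ and $(L_{a,b}^{2n+1}(\mathcal{D}))_n$ are each nested, because $L_{a,b}^2(\mathcal{D})\subseteq\mathcal{D}$. Therefore $A$ belongs to one of them for infinitely many $n$, and by nestedness it belongs to every member of that sequence. By Lemma \ref{lem:P_intersection_D'} and the first identity, $A\in\ell_R$ or $A\in\ell_L$, so $A\in\ell$.

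The only delicate point is this pigeonhole-plus-nestedness step. It is what lets us avoid having to know whether $\mathcal{D}$ and $L_{a,b}(\mathcal{D})$ are disjoint. The remaining steps are direct consequences of invariance and bijectivity.
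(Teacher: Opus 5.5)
Your proof is correct and is essentially the argument the paper leaves implicit, since the corollary is stated without proof as a direct consequence of Lemma \ref{lem:P_intersection_D'}: you transport the lemma by the homeomorphism $L_{a,b}$ using $L_{a,b}(\ell)=\ell$, and then combine the nested even and odd sequences. Your pigeonhole-plus-nestedness step supplies the detail the paper omits for passing from $\ell_R\cup\ell_L$ to $\bigcap_{k} L_{a,b}^{k}(\mathcal{D}\cup L_{a,b}(\mathcal{D}))$, and the inclusion $\ell\subseteq\mathcal{D}\cup L_{a,b}(\mathcal{D})$ you rely on is the one the paper already asserts when it writes $\ell=\ell_L\cup\ell_R$.
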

	
	From Corollary \ref{cor:polygon_D} and the Brouwer Fixed Point Theorem, we know that $\mathcal{D}\cup L_{a,b}(\mathcal{D})$ contains the periodic orbit $\{P,P'\}$. Combined with Corollary \ref{cor:ell_intersect_D}, this allows us to conclude the following result.
	
	\begin{cor}\label{cor:P_in_Ell}
	The periodic orbit $\{P,P'\}$ is contained in $\ell$.
	\end{cor}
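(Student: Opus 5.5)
The plan is to show separately that $P\in\ell_R$ and $P'\in\ell_L$, using the intersection representations of Lemma \ref{lem:P_intersection_D'} and Corollary \ref{cor:ell_intersect_D}. The first step is to locate $P$ and $P'$ in the two polygons. By Corollary \ref{cor:polygon_D}, $L_{a,b}^2$ maps the compact polygon $\mathcal{D}$ into itself. The boundary $\partial\mathcal{D}$ is a simple closed polygonal curve, so $\mathcal{D}$ is homeomorphic to a closed disc. The Brouwer Fixed Point Theorem then gives a fixed point of $L_{a,b}^2$ in $\mathcal{D}$.

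For parameters in $\mathfrak{R}$, the only fixed points of $L_{a,b}^2$ are $X$, $Y$, $P$ and $P'$, so we must rule out the others. Since $X\in W_X^u$ and $W_X^{u+}\setminus[X,T]^u\subset\Int\mathcal{D}$, we have $X\notin\mathcal{D}$: the boundary arc $[Z,T]^u$ starts at $Z$, not at $X$, and near $X$ the manifold is isolated by Corollary \ref{cor:eps}. The point $Y$ lies in $\mathcal{Q}_3$, and we will check that it is outside $\mathcal{D}$, which lies essentially in the right half-plane near $\mathcal{Q}_1\cup\mathcal{Q}_4$. The point $P'$ lies in $\mathcal{Q}_2$ on the other side. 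So the fixed point produced is $P$ (or $P'$), and applying $L_{a,b}$ places the other point of the orbit in $L_{a,b}(\mathcal{D})$.

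Next, $P$ is fixed by $L_{a,b}^2$, so $P\in\mathcal{D}$ gives $P=L_{a,b}^{2k}(P)\in L_{a,b}^{2k}(\mathcal{D})$ for every $k$. Hence
\[
P\in\bigcap_{k\geqslant 0}L_{a,b}^{2k}(\mathcal{D})=\ell_R
\]
by Lemma \ref{lem:P_intersection_D'}. In the same way, $P'=L_{a,b}(P)$ lies in every $L_{a,b}^{2k+1}(\mathcal{D})$, so $P'\in\ell_L$ by Corollary \ref{cor:ell_intersect_D}. Alternatively, one can use directly that $\{P,P'\}$ is invariant and contained in $\mathcal{D}\cup L_{a,b}(\mathcal{D})$, and conclude from the second formula of Corollary \ref{cor:ell_intersect_D}.

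The main obstacle is excluding $X$ and $Y$ from $\mathcal{D}$, i.e.\ making sure the Brouwer fixed point really is a point of $\{P,P'\}$. For $X$, I would argue via the isolation of $(X,Z^{-1})^u$ (proof of Corollary \ref{cor:eps}), or via the uniqueness of the invariant stable directions. A cleaner route covering both $X$ and $Y$ is to note that they are hyperbolic saddles, hence not attracting. If either lay in $\mathcal{D}$, it would lie in $\bigcap_k L^{2k}_{a,b}(\mathcal{D})=\ell_R\subset\Cl W_X^u\setminus W_X^u$. This is impossible for $X\in W_X^u$. For $Y$, we would need the extra geometric fact that $Y\notin\Cl W_X^u$, which I expect to verify from the half-plane location of $\mathcal{D}\cup L_{a,b}(\mathcal{D})$.
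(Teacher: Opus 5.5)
Your route is the paper's: Brouwer's theorem for $L_{a,b}^2$ on the disc $\mathcal{D}$ (Corollary~\ref{cor:polygon_D}), then invariance of $\{P,P'\}$ and the intersection formulas of Lemma~\ref{lem:P_intersection_D'} and Corollary~\ref{cor:ell_intersect_D}. You are right that the fixed points of $L_{a,b}^2$ are exactly $X,Y,P,P'$. Your second argument for $X$ is sound: a fixed point in $\mathcal{D}$ lies in every $L_{a,b}^{2k}(\mathcal{D})$, hence in $\ell_R$, which is disjoint from $W_X^u\ni X$. It uses nothing about hyperbolicity, and your first argument for $X$ does not by itself place $X$ outside $\mathcal{D}$.

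The gap is $Y$. You leave $Y\notin\mathcal{D}$ unproved, and the route you propose rests on a false premise. The polygon $\mathcal{D}$ contains the tail $W_X^{u+}\setminus[X,Z)^u$, and this tail need not stay in the right half-plane. In the second case of Lemma~\ref{lem:point_S} it lies below $Ox$ and meets $Oy$, and Figure~\ref{fig:entropy_ell_K} draws $Z^4$ and $Z^6$ in $\mathcal{Q}_3$. So no quadrant argument separates $Y\in\mathcal{Q}_3$ from $\mathcal{D}$. The $\ell_R$ argument would need $Y\notin\Cl W_X^u$, which is exactly what is not known.

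The missing step is dynamical rather than positional.
\begin{itemize}
\item Suppose $Y\in\mathcal{D}$. Since $Y$ is fixed and $L_{a,b}^{2k}(\mathcal{D})\subset\Int\mathcal{D}$ for some $k$ (Corollary~\ref{cor:D_eventually_trapping}), in fact $Y\in\Int\mathcal{D}$.
\item Let $\mu=\tfrac{1}{2}\bigl(a+\sqrt{a^2+4b}\bigr)$; then $\mu>1$ because $a+b>1$. The half-line $W_Y^{u+}=\{Y-t(\mu,b)\colon t\geqslant 0\}$ lies in $\{x<0\}$, where $L_{a,b}$ is affine, and $L_{a,b}(Y-t(\mu,b))=Y-\mu t(\mu,b)$.
\item Take a short initial segment $J\subset\Int\mathcal{D}$ of $W_Y^{u+}$. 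Then $L_{a,b}^{2n}(J)\subset\mathcal{D}$ for all $n$ by Corollary~\ref{cor:polygon_D}, yet $L_{a,b}^{2n}(J)$ is $\mu^{2n}$ times as long as $J$. This contradicts the boundedness of $\mathcal{D}$.
\end{itemize}
With $X$ and $Y$ excluded, the Brouwer fixed point is $P$ or $P'$. Your conclusion then follows, most cleanly via the second formula of Corollary~\ref{cor:ell_intersect_D}, which does not care which of the two lies in $\mathcal{D}$. The paper's own proof is the same one-line combination of Brouwer and Corollary~\ref{cor:ell_intersect_D} and leaves this exclusion implicit, so this is precisely the step worth writing out.
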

	
	Corollary \ref{cor:ell_intersect_D} implies that $\ell$ arises as a compact, maximal invariant set in an eventually trapping region $\mathcal{D}$. Heuristically, $\ell$ plays the role of an attractor for $L_{a,b}$, while its connected components $\ell_L$ and $\ell_R$ play the role of attractors for $L_{a,b}^2$.

	\subsection{Main result} \label{subsec:main_result} 
	
	Recall that $\mathcal{R}\subset\mathfrak{R}$ is the set of parameter pairs such that for every $n\in\mathbb{N}_0$, point $Z^{2n-1}$ belongs to the left and $Z^{2n}$ to the right half-plane. We know from \cite{misiurewicz2024zero} that $h_{top}(L_{a,b})=0$ for all $(a,b) \in \mathcal{R}$. We will expand this result to parameter pairs $(a,b) \in \mathfrak{R}\setminus\mathcal{R}$, that is, parameters such that the unstable manifold $W_X^u$ intersects the coordinate axes at points other than $Z$ and $Z^{-1}$.
	
	Let $W_Y^{u+}$ denote the lower connected component of the fixed point $Y$ in the third quadrant $\mathcal{Q}_3$ (it is a ray starting at $Y$ and going down in $\mathcal{Q}_3$). In the extended plane, we define the set
	\begin{equation*}
	\mathcal{N}:=W_X^u\cup\ell\cup W_X^{s+}\cup W_Y^{u+}\cup\{\infty\}.
	\end{equation*}
By construction, $\mathcal{N}$ is $L_{a,b}$-invariant and $L_{a,b}^{-1}$-invariant, compact, connected in the extended plane, and does not separate either the extended plane or the plane.
	
	Let $U'$ be the complement of $\mathcal{N}$ in the extended plane. Then $U'$ is $L_{a,b}$-invariant and $L_{a,b}^{-1}$-invariant by construction and, due to Corollary \ref{cor:P_in_Ell}, does not contain any fixed points of $L_{a,b}^2$.
	
		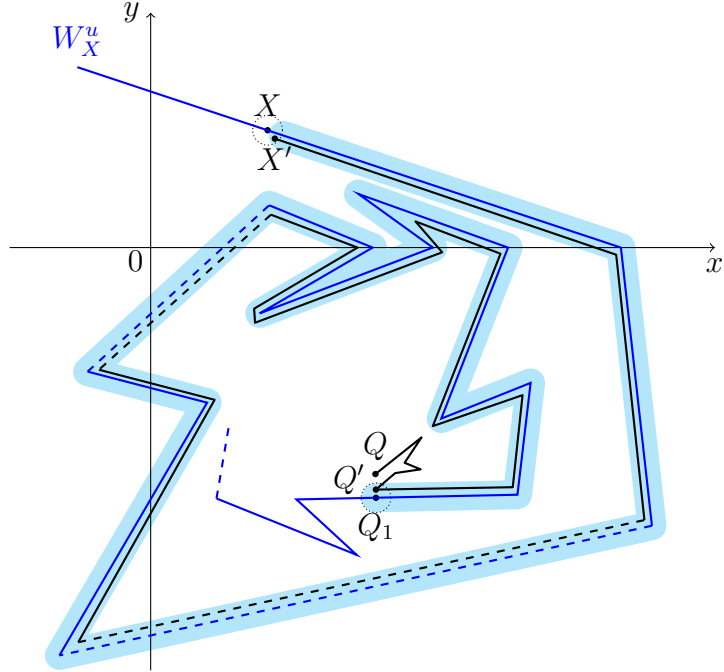
\begin{figure}[!ht]
	\begin{center}
	\begin{tikzpicture}[auto, scale=1.25]
			\tikzstyle{nodec}=[draw,circle,fill=black,minimum size=2pt,
			inner sep=0pt, label distance=2mm]
			\tikzstyle{nodeh}=[draw,circle,fill=white,minimum size=4pt,
			inner sep=0pt]
			\tikzstyle{dot}=[circle,draw=none,fill=none,minimum size=0pt,inner sep=2pt, outer sep=-1pt]

			\draw[->] (-1.5,0)--(6,0) node [below]{$x$};
			\draw[->] (0,-4.5)--(0,2.5) node [left]{$y$};
			\node[label={[xshift=-0.2cm, yshift=-0.6cm]$0$}] at (0,0) {};
			
			\coordinate (e0) at (0,0);
			\coordinate (ex) at (1,0);
			\coordinate (ey) at (0,1);
			
			\coordinate (q) at (2.39,-2.41);

			\coordinate (a01) at (-0.78,1.92);
			\coordinate (a02) at (5,0);
			\coordinate (a03) at (5.33,-2.96);
			\coordinate (a04) at (-0.97,-4.34);
			\coordinate (a05) at (0.6,-1.65);
			\coordinate (a06) at (-0.67,-1.32);
			\coordinate (a07) at (1.26,0.45);
			\coordinate (a08) at (2.36,0);
			\coordinate (a09) at (1.16,-0.7);
			\coordinate (a10) at (3,0);
			\coordinate (a11) at (2.21,0.57);
			\coordinate (a12) at (3.8,0); %(4.17,0);
			\coordinate (a13) at (3.09,-1.82);
			\coordinate (a14) at (4.04,-1.44);
			\coordinate (a15) at (3.9,-2.63);
			\coordinate (a16) at (1.55,-2.68);
			\coordinate (a17) at (2.19,-3.27);
			\coordinate (a18) at (0.7,-2.67);
			\coordinate (a19) at (0.84,-1.85);
			
			\coordinate (q1) at ($(a15)!(q)!(a16)$);
			\node[nodec, label={[above]$Q$}] at (q) {};
			\node[nodec, label={[below, yshift=-1mm]$Q_1$}] at (q1) {};
			
			\coordinate (x) at ($(a01)!0.35!(a02)$);
			\node[nodec, label={[above]$X$}] at (x) {};
			
			\draw[densely dotted] (q1) circle (4.5pt);
			\draw[densely dotted] (x) circle (4.5pt);
			
			\coordinate (qp) at ($(q1)!2.5pt!(q)$);
			\node[nodec, label={[left, yshift=1mm]$Q'$}] at (qp) {};
			
			\coordinate (b01) at (3.85,-2.55);
			\coordinate (b02) at (3.9544,-1.57);
			\coordinate (b03) at (3,-1.9);
			\coordinate (b04) at (3.72,-0.06538);
			\coordinate (b05) at (2.81532,0.2785);
			\coordinate (b06) at (3.1,-0.05);
			\coordinate (b07) at (1.11,-0.8);
			\coordinate (b071) at (1.1,-0.65);
			\coordinate (b08) at (2.2,0);
			\coordinate (b09) at (1.27863,0.35);
			\coordinate (b10) at (-0.55,-1.3);
			\coordinate (b11) at (0.68,-1.62);
			\coordinate (b12) at (-0.77,-4.2);
			\coordinate (b13) at (5.25,-2.9);
			\coordinate (b14) at (4.95,-0.07612);
			\coordinate (b15) at (1.32,1.15967);
			
			\coordinate (start) at ($(x)!5pt!(a02)$);
			
			\node[nodec, label={[below]$X'$}] at (b15) {};
			
			\draw[blue, thick] (a02)--(a01) node[above]{$W_X^u$};
			\draw[blue, thick] (a02)--(a03);
			\draw[blue, thick, dashed] (a03)--(a04);
			\draw[blue, thick] (a04)--(a05)--(a06);
			\draw[blue, thick, dashed] (a06)--(a07);
			\draw[blue, thick] (a07)--(a08)--(a09)--(a10)--(a11)--(a12)--(a13)--(a14)--(a15)--(a16)--(a17)--(a18);
			\draw[blue, thick, dashed] (a18)--(a19);
			
			\coordinate (c1) at (2.88,-2.02);
			\coordinate (c2) at (2.7,-2.29);
			\coordinate (c3) at (2.87,-2.36);
			\coordinate (c4) at (2.6,-2.4);			
			
			\draw [black, thick, join=round] (q)--(c1)--(c2)--(c3)--(c4)--(qp)--(b01)--(b02)--(b03)--(b04)--(b05)--(b06)--(b07)--(b071)--(b08)--(b09);
			\draw [black, thick, dashed, join=round] (b09)--(b10);
			\draw [black, thick, join=round] (b10)--(b11)--(b12);
			\draw[black, thick, dashed] (b12)--(b13);
			\draw [black, thick, join=round] (b13)--(b14)--(b15);

		\begin{pgfonlayer}{bg}
	\draw [line width=11pt, cap=round, join=round, cyan, opacity=0.3] (start)--(a02)--(a03)--(a04)--(a05)--(a06)--(a07)--(a08)--(a09)--(a10)--(a11)--(a12)--(a13)--(a14)--(a15)--(q1);	
    	\end{pgfonlayer}
 	
			\end{tikzpicture}
	\end{center}
	\caption{Construction of a path from $Q$ to $X'$ (Lemma \ref{lem:U'_pathwise}). The path from $Q'$ to $X'$ is contained in an open neighborhood of $(X,Q_1)^{u}$ which contains only one arc component of $W_X^u$ (Corollary \ref{cor:eps}).\label{fig:U'_pathwise}} 
	\end{figure}
	
	\begin{lem} \label{lem:U'_pathwise}
	The set $U'$ is path-connected.
	\end{lem}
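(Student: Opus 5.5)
The plan is to connect every point $Q\in U'$ to a single fixed base point $X'\in U'$ chosen very close to $X$. Since $U'$ is open, it is locally path-connected, so it suffices to build a path from an arbitrary $Q$ to $X'$. The natural base region is a small ball $B_\rho(X)$. By Corollary \ref{cor:eps} and because $\ell$ is closed and does not contain $X$, the set $\mathcal{N}$ meets a small enough ball around $X$ only in two transverse segments: a piece of $W_X^u$ and a piece of $W_X^{s+}$. So $B_\rho(X)\setminus\mathcal{N}$ consists of finitely many open sectors. The first step is to check that these sectors can be joined to each other inside $U'$. One sector is bounded by $W_X^{s+}$ and the right branch $W_X^{u+}$. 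Any two sectors adjacent along $W_X^{s+}$ or along $W_X^u$ can be joined by travelling around the ray $W_X^{s+}$ far up in $\mathcal{Q}_1$, which is away from all other parts of $\mathcal{N}$, or around compact pieces of $W_X^u$ as described below. Fix $X'$ in one of these sectors.

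Given $Q\in U'$, I would split into two cases. If $Q\notin\mathcal{D}\cup L_{a,b}(\mathcal{D})$, then $Q$ lies in the part of the plane outside these polygons. There, $\mathcal{N}$ consists only of the compact arc $[Z^{-1},Z]^u$, the ray $W_X^{s+}$, and the ray $W_Y^{u+}$; the remaining parts of $W_X^u$ and all of $\ell$ lie inside the polygons (Corollary \ref{cor:ell_intersect_D}). The complement of this union of one arc, two rays and the polygons is connected and reaches $B_\rho(X)$, so a polygonal path to $X'$ exists.

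If instead $Q\in\mathcal{D}\cup L_{a,b}(\mathcal{D})$, say $Q\in\mathcal{D}$ (the other case is symmetric after applying $L_{a,b}$), then $Q\notin\ell_R$. Lemma \ref{lem:P_intersection_D'} then gives some $k$ with $Q\notin L_{a,b}^{2k}(\mathcal{D})$. By \eqref{eq:polygon_D_iterates}, $Q$ lies in some $L_{a,b}^{2j}(\mathcal{K})$, or on $L_{a,b}^{2j}(\overline{ZT})$, with $j<k$. These pieces are bounded by finitely many arcs of $W_X^u$, namely $[Z^{2j},Z^{2j+2}]^u$ and $[T^{2j},T^{2j+2}]^u$, together with segments that do not meet $\mathcal{N}$ except at their endpoints. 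I would therefore move from $Q$ in a straight line to the nearest point $Q_1$ on the compact arc $[X,T^{2k+2}]^u$, stopping at a point $Q'$ just before reaching it. Then I follow the arc $(X,Q_1)^u$ backwards to $X$ inside the neighbourhood $B_\varepsilon((X,Q_1)^u)$ supplied by Corollary \ref{cor:eps}, staying on one side of the arc and never crossing it. This neighbourhood meets $W_X^u$ only in that arc, and, shrinking $\varepsilon$ if needed, it avoids both $\ell$ (since the arc is compact and disjoint from the closed set $\ell$) and $W_X^{s+}$, $W_Y^{u+}$ (since there are no homoclinic points). This path ends in one of the sectors at $X$, and the first step then carries it to $X'$.

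The main obstacle is the first leg of this path: guaranteeing that the straight segment from $Q$ to $Q_1$, or some substitute for it, avoids $\mathcal{N}$. Here I would use the decomposition \eqref{eq:polygon_D_iterates}. The interiors of the pieces $L_{a,b}^{2j}(\mathcal{K})$ are disjoint from $W_X^u$ and from $\ell$, because $\ell\subset L_{a,b}^{2k}(\mathcal{D})$ for all $k$. So any point of $\Int L_{a,b}^{2j}(\mathcal{K})$ can reach the boundary arc $[Z^{2j},Z^{2j+2}]^u$ inside this polygon, which is simply connected. Points on the segments $L_{a,b}^{2j}(\overline{ZT})$ are first pushed slightly into an adjacent $\mathcal{K}$-piece. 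This local step is the one to handle with care.
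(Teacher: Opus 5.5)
Your outline follows the paper's proof. It relies on the connectedness of the exterior region $E$, the complement of $\mathcal{N}\cup\mathcal{D}\cup L_{a,b}(\mathcal{D})$. It then places a point of $\mathcal{D}\setminus(W_X^u\cup\ell)$ in a piece $L_{a,b}^{2j}(\mathcal{K})$ via Lemma \ref{lem:P_intersection_D'} and (\ref{eq:polygon_D_iterates}). Next it moves inside that disc to a boundary arc on $W_X^u$; the paper uses $[T^{2k_0-2},T^{2k_0}]^u$, you use $[Z^{2j},Z^{2j+2}]^u$, and either works. Finally it runs along one side of the thin neighbourhood from Corollary \ref{cor:eps} back towards $X$.

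\textbf{Joining the sectors at $X$.} This step fails as written. Sectors on opposite sides of $W_X^{s+}$ cannot be connected by going around that ray ``far up in $\mathcal{Q}_1$'', because the ray is unbounded and $\infty\in\mathcal{N}$. Any such path must wind around all of $\Cl W_X^u$. The fix is already in your first case. For small $\rho$, every sector of $B_\rho(X)\setminus\mathcal{N}$ lies in $E$, since $X\notin\mathcal{D}\cup L_{a,b}(\mathcal{D})$. Since $E$ is connected, the sectors are joined inside $E$. This is exactly the role of the paper's point $X'$.

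\textbf{The tube near $X$.} Shrinking $\varepsilon$ cannot make the neighbourhood of $(X,Q_1)^u$ avoid $W_X^{s+}$. The directions $(\lambda_X^s,b)$ of $W_X^{s+}$ and $(-\lambda_X^u,-b)$ of $W_X^{u+}$ have inner product $b(1-b)>0$. So the ray makes an acute angle with $W_X^{u+}$ and enters every such neighbourhood near $X$. Use instead a thin neighbourhood of a compact arc $[M,Q_1]^u$, with $M$ close to $X$. That neighbourhood misses $W_X^{s+}$, $W_Y^{u+}$ and $\ell$, and it ends in a sector, which is what your phrase ``ends in one of the sectors'' needs.

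Minor: the part of $W_X^u$ lying outside the polygons is $[Z^1,Z]^u$, not $[Z^{-1},Z]^u$.
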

	
	\begin{proof}
	Let the polygon $\mathcal{D}$ and the point $T$ be as in (\ref{eq:polygon_D_boundary}). Notice that the complement of the set $\mathcal{N}\cup\mathcal{D}\cup L_{a,b}(\mathcal{D})$ in the extended plane is path-connected since it is open and connected. Therefore, to prove that $U'$ is path-connected, it suffices to show that the set $\mathcal{D}\setminus(W_X^u\cup\ell)$ is path-connected (and this will imply that the same holds for $L_{a,b}(\mathcal{D})\setminus(W_X^u\cup\ell)$). 
	
	In order to prove that $\mathcal{D}\setminus(W_X^u\cup\ell)$ is path-connected, we will show the following: for any point $Q$ lying in $\mathcal{D}\setminus(W_X^u\cup\ell)$, there is a path from $Q$ to a point lying in an $\varepsilon$-neighborhood of the fixed point $X$, for a specially chosen $\varepsilon>0$; see Figure \ref{fig:U'_pathwise}.
	
	Let $Q\in\mathcal{D}\setminus(W_X^u\cup\ell)$ be arbitrary but fixed. Since $Q\notin\ell$, Lemma \ref{lem:P_intersection_D'} implies that there exists $k\in\mathbb{N}$ such that $Q\notin L_{a,b}^{2k}(\mathcal{D})$. Let $k_0$ be the smallest such $k$. We then have $Q\in L_{a,b}^{2k_0-2}(\mathcal{D})$ and $Q\notin L_{a,b}^{2k_0}(\mathcal{D})$. From (\ref{eq:polygon_D_iterates}) follows that $Q \in L_{a,b}^{2k_0-2}(\mathcal{K})$, where $\mathcal{K}$ is the polygon given by (\ref{eq:polygon_K_boundary}).
	
	Let $Q_1 \in [T^{2k_0-2},T^{2k_0}]^u$ be arbitrary, but fixed. Since $L_{a,b}^{2k_0-2}(\mathcal{K})$ is a polygon, there exists a path $\mathbf{b}_1 \subset L_{a,b}^{2k_0-2}(\mathcal{K})$ joining $Q$ to $Q_1$, and that path can be chosen such that it does not intersect $\partial L_{a,b}^{2k_0-2}(\mathcal{K})$ except at $Q_1$ and possibly $Q$. In other words, $\mathbf{b}_1 \setminus \{Q,Q_1\} \subset \Int L_{a,b}^{2k_0-2}(\mathcal{K})$, so it follows that $\mathbf{b}_1 \cap \ell = \emptyset$ and $\mathbf{b}_1 \cap W_X^u = \{Q_1\}$.
	
	On the other hand, Corollary \ref{cor:eps} implies the existence of $\varepsilon>0$ such that 
	$$B_{\varepsilon}((X,T^{2k_0})^{u})\cap W_X^u=(X,T^{2k_0})^{u}.$$ In addition, the set $B_{\varepsilon}((X,T^{2k_0})^{u})\setminus[X,T^{2k_0}]^{u}$ consists of two connected components. Exactly one of these components intersects $\mathbf{b}_1$. We denote that connected component by $\mathcal{B}$. By Corollary \ref{cor:eps}, the neighborhood $B_{\varepsilon}((X,T^{2k_0})^{u})$ contains no points of $\ell$; hence $\mathcal{B}\cap\ell=\emptyset$.
	
	Let $Q'$ be any point lying on $\mathcal{B} \cap (\mathbf{b}_1 \setminus Q_1)$, and let $X'$ be any point lying in $(\mathcal{B}\cap B_{\varepsilon}(X))\setminus W_X^{s+}$. Notice that $X'$ can be chosen such that it lies outside of $\mathcal{D}$. Since $\mathcal{B}$ is open and connected, it is also path-connected. Therefore, there exists a path $\mathbf{b}_2 \subset\mathcal{B}$ from $Q'$ to $X'$. By construction, the path $\mathbf{b}_2$ lies in $U'$, which finishes the proof.   
	\end{proof}
	
	We conclude with the proof of our main theorem and its consequence. 
	
	\begin{proof}[Proof of Theorem \ref{thm:L2_non_wandering_set}]
	Let $U'$ and $\mathcal{N}$ be defined as above. Lemma \ref{lem:U'_pathwise} implies that $U'$ is connected and since its complement $\mathcal{N}$ in the extended plane is also connected, it follows that $U'$ is simply connected in the extended plane and in the plane.
	
	The Riemann mapping theorem (RMT) states that for every simply connected open subset $D$ of the plane which is not the whole plane, there exists a biholomorphic map of $D$ onto the unit disk. Therefore, by RMT we have that $U'$ is homeomorphic to the open unit disk and moreover to the plane. 
	
	Moreover, $L_{a,b}^2\restrict{U'} \colon U' \rightarrow U'$ is an orientation-preserving, fixed-point-free homeomorphism. Therefore, by Corollary \ref{cor:BPTT_corollary}, every point of $U'$ is a wandering point for $L_{a,b}^2$. 
	
	Furthermore, every point in $(W_X^u \setminus \{X\}) \cup (W_X^{s+} \setminus \{X\}) \cup (W_Y^{u+} \setminus \{Y\})$ is a wandering point for $L_{a,b}$ and hence a wandering point for $L_{a,b}^2$ as well. We thus conclude that every non-wandering point for $L_{a,b}^2$ is either $X$, $Y$, or a point in $\ell$, which completes the proof.
	\end{proof}
	
	\begin{proof}[Proof of Corollary \ref{cor:entropy_gothR}]
	Theorem \ref{thm:L2_non_wandering_set} implies that the non-wandering set of the restriction $L_{a,b}^2\restrict{\mathbb{R}^2\setminus\ell}$ consists of its fixed points only, that is, $\Omega(L_{a,b}^2\restrict{\mathbb{R}^2\setminus\ell}) = \{X,Y\}$. By (\ref{eq:entropy_non_wandering_restriction}), we have that $h_{top}(L_{a,b}^2\restrict{\mathbb{R}^2\setminus\ell})=0$. Since $h_{top}(L_{a,b}^2\restrict{\mathbb{R}^2\setminus\ell})=2h_{top}(L_{a,b}\restrict{\mathbb{R}^2\setminus\ell})$, the claim follows. 
	\end{proof}
	
	\section{Concluding Remarks} \label{sec:concluding_remarks}
	
	For parameters $(a,b)\in\mathcal{R}$, we know from \cite{misiurewicz2024zero} that the accumulation set $\ell$ is $\ell=\{P,P'\}$, and that $h_{top}(L_{a,b}\restrict{\mathbb{R}^2\setminus\ell})=h_{top}(L_{a,b})=0$. Therefore, we see that Corollary \ref{cor:entropy_gothR} also holds for parameter values in $\mathcal{R}$, and it generalizes Theorem 1.1 in \cite{misiurewicz2024zero} to a larger set of parameters.
	
	Guided by this, one might think that $\ell=\{P,P'\}$ could hold for all parameters $(a,b)\in\mathfrak{R}$, that is, that the topological entropy of $L_{a,b}$ is zero on that whole parameter set. However, in a private communication, Y.\ Ishii and D.\ Sands have pointed out to the author that for $(a,b) = (1.6,0.61)$, the Lozi map $L_{a,b}$ has saddle period-six points whose stable and unstable manifolds intersect transversely. In addition, they also remark that other counterexamples exist, such as parameter pairs in a small neighborhood of $(a,b) = (1.5,0.5)$. These findings indicate positive topological entropy of $L_{a,b}$ (see, e.g., \cite{burns1995geometric}), and moreover, they imply that the accumulation set $\ell$ can have a nontrivial structure for parameters in $\mathfrak{R}\setminus\mathcal{R}$. Therefore, a complete characterization of the zero entropy locus for the Lozi map remains unresolved, and determining it precisely continues to be an open problem that calls for further investigation.

	\section*{Acknowledgements}
	
	This work was supported in part by the Croatian Science Foundation grants MOBODL-2023-08-4960, IP-2022-10-9820 GLODS, UIP-2025-02-4309 ToGeCoP, and in part by the European Research Executive Agency grant 101183111-DSYREKI-HORIZON-MSCA-2023-SE-01.
	
	The author thanks S.\ \v{S}timac for reading the text and the constructive advice on its improvements, as well as J.\ Boro\'nski for useful discussions concerning certain proofs in this work. The author would also like to thank Y.\ Ishii and D.\ Sands for sharing their findings which provided a new perspective on the zero entropy locus of the Lozi family and on the potential nontrivial structure of $\ell$ as well. Finally, the author expresses his gratitude to the anonymous Reviewers whose constructive comments improved the exposition of the paper.

	\section*{Author declarations}
	
	\subsection*{Conflict of interest}
	
	The author has no conflicts to disclose.
	
	\subsection*{Author contributions}
	
	Kristijan Kilassa Kvaternik: Conceptualization (Lead); Formal analysis (Lead); Investigation (Lead); Methodology (Lead); Writing - original draft (Lead).

\end{document}